\newtheorem{theorem}{Theorem}[section]
\newtheorem{lemma}{Lemma}[section]
\newtheorem{remark}{Remark}[section]
\newtheorem{question}{Question}[section]
\newtheorem{example}{Example}[section]
\numberwithin{equation}{section}
\begin{document}
\title{A note on some inequalities for positive linear maps}
\author{Hamid Reza Moradi$^1$, Mohsen Erfanian Omidvar$^2$, Ibrahim Halil G\"um\"u\c s$^3$ and Razieh Naseri$^4$}
\subjclass[2010]{47A63, 47A30.}
\keywords{Positive linear maps, operator norm, AM-GM inequality, Young inequality.} \maketitle
\begin{abstract}
We improve and generalize some operator inequalities for positive linear maps. It is shown, among other inequalities, that if $0<m\le B\le m'<M'\le A\le M$ or $0<m\le A\le m'<M'\le B\le M$, then for each $2\le p<\infty $ and $\nu \in \left[ 0,1 \right]$,
\begin{equation*}
{{\Phi }^{p}}\left( A{{\nabla }_{\nu }}B \right)\le {{\left( \frac{K\left( h \right)}{{{4}^{\frac{2}{p}-1}}{{K}^{r}}\left( h' \right)} \right)}^{p}}{{\Phi }^{p}}\left( A{{\#}_{\nu }}B \right),
\end{equation*} 
and
\begin{equation*}
{{\Phi }^{p}}\left( A{{\nabla }_{\nu }}B \right)\le {{\left( \frac{K\left( h \right)}{{{4}^{\frac{2}{p}-1}}{{K}^{r}}\left( h' \right)} \right)}^{p}}{{\left( \Phi \left( A \right){{\#}_{\nu }}\Phi \left( B \right) \right)}^{p}},
\end{equation*}
where $r=\min \left\{ \nu ,1-\nu \right\}$, $h=\frac{M}{m}$ and $h'=\frac{M'}{m'}$. We also obtain an improvement of operator P\'olya-Szeg\"o inequality.
\end{abstract}
\pagestyle{myheadings}
\markboth{\centerline {A note on some inequalities for positive linear maps}}
{\centerline {H.R. Moradi, M.E. Omidvar, I.H. G\"um\"u\c s \& R. Naseri}}
\bigskip
\bigskip
\section{Introduction}
Let us introduce a notation and state a few elementary facts that will be helpful in the ensuing discussion. Throughout this paper, we reserve $M,M',m,m'$ for real numbers and $I$ for the identity operator. Other capital letters denote general elements of the ${{C}^{*}}$-algebra $\mathcal{B}\left( \mathcal{H} \right)$ of all bounded linear operators on a complex separable Hilbert space $\left( \mathcal{H},\left\langle \cdot,\cdot \right\rangle \right)$. Also, we identify a scalar with the unit multiplied by this scalar. $\left\| \cdot \right\|$ denote the operator norm. An operator $A$ is said to be positive (strictly positive) if $\left\langle Ax,x \right\rangle \ge 0$ for all $x\in \mathcal{H}$ ($\left\langle Ax,x \right\rangle >0$ for all $x\in \mathcal{H}\backslash \left\{ 0 \right\}$) and write $A\ge 0$ ($A>0$). $A\ge B$ ($A>B$) means $A-B\ge 0$ ($A-B>0$). A linear map $\Phi :\mathcal{B}\left( \mathcal{H} \right)\to \mathcal{B}\left( \mathcal{K} \right)$ is called positive if $\Phi \left( A \right)\ge 0$ whenever $A\ge 0$. It is said to be unital if $\Phi \left( I \right)=I$.
As a matter of convenience, we use the following notations to define the weighted arithmetic and geometric means for operators:
\[A{{\nabla }_{\nu }}B=\left( 1-\nu  \right)A+\nu B,\quad A{{\#}_{\nu }}B={{A}^{\frac{1}{2}}}{{\left( {{A}^{-\frac{1}{2}}}B{{A}^{-\frac{1}{2}}} \right)}^{\nu }}{{A}^{\frac{1}{2}}},\]
where $A,B>0$ and $\nu \in \left[ 0,1 \right]$. 

Lin \cite{6} reduced the study of squared operator inequalities to that of some norm inequalities.

According to the celebrated paper by Lin {{\cite[Theorem 2.1]{6}}}, if $0<m\le A,B\le M$, then
\begin{equation}\label{14}
{{\Phi }^{2}}\left( \frac{A+B}{2} \right)\le {{K}^{2}}\left( h \right){{\Phi }^{2}}\left( A\#B \right),
\end{equation}
and 
\begin{equation}\label{15}
{{\Phi }^{2}}\left( \frac{A+B}{2} \right)\le {{K}^{2}}\left( h \right){{\left( \Phi \left( A \right)\#\Phi \left( B \right) \right)}^{2}},
\end{equation}
where $K\left( h \right)=\frac{{{\left( h+1 \right)}^{2}}}{4h}$ and $h=\frac{M}{m}$.

Related to this, Xue and Hu {{\cite[Theorem 2]{3}}} proved that if $0<m\le A\le m'<M'\le B\le M$, then
\begin{equation}\label{21}
{{\Phi }^{2}}\left( \frac{A+B}{2} \right)\le \frac{{{K}^{2}}\left( h \right)}{K\left( h' \right)}{{\Phi }^{2}}\left( A\#B \right),
\end{equation}
and
\begin{equation}\label{22}
{{\Phi }^{2}}\left( \frac{A+B}{2} \right)\le \frac{{{K}^{2}}\left( h \right)}{K\left( h' \right)}{{\left( \Phi \left( A \right)\#\Phi \left( B \right) \right)}^{2}}.
\end{equation}
where $K\left( h \right)=\frac{{{\left( h+1 \right)}^{2}}}{4h},\text{ }K\left( h' \right)=\frac{{{\left( h'+1 \right)}^{2}}}{4h'},\text{ }h=\frac{M}{m}$ and $h'=\frac{M'}{m'}$.
   
We will get a stronger result than \eqref{21}-\eqref{22} (see Theorem \ref{5}).

Lin's results was further generalized by several authors. Among them, Fu and He {{\cite[Theorem 4]{7}}} generalized \eqref{14} and \eqref{15} to the power of $p$ ($2\le p<\infty $) as follows:
\begin{equation}\label{16}
{{\Phi }^{p}}\left( \frac{A+B}{2} \right)\le {{\left( \frac{{{\left( M+m \right)}^{2}}}{{{4}^{\frac{2}{p}}}Mm} \right)}^{p}}{{\Phi }^{p}}\left( A\#B \right),
\end{equation}
and
\begin{equation}\label{17}
{{\Phi }^{p}}\left( \frac{A+B}{2} \right)\le {{\left( \frac{{{\left( M+m \right)}^{2}}}{{{4}^{\frac{2}{p}}}Mm} \right)}^{p}}{{\left( \Phi \left( A \right)\#\Phi \left( B \right) \right)}^{p}}.
\end{equation}
It is interesting to ask whether the inequalities \eqref{16} and \eqref{17} can be improved. This is an another motivation of the present paper (Theorem \ref{20}). We close the paper by improving operator P\'olya-Szeg\"o inequality (Theorem \ref{46}).
\section{Main Results}
We give some Lemmas before we give the main theorems of this paper:
\begin{lemma}\label{50}
{{\cite[Lemma 2.3]{ba}}} Let $A,B>0$ and $\alpha >0$, then
\[A\le \alpha B\quad\Leftrightarrow \quad\left\| {{A}^{\frac{1}{2}}}{{B}^{-\frac{1}{2}}} \right\|\le {{\alpha }^{\frac{1}{2}}}.\] 
\end{lemma}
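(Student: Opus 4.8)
The plan is to establish the two-sided implication through a short chain of equivalences that converts the operator inequality $A\le \alpha B$ into a norm bound by exploiting the identity $\left\| X \right\|^2=\left\| X^*X \right\|$ together with the congruence (conjugation) invariance of the operator order. Since $A,B>0$, the square roots $A^{\frac{1}{2}}$, $B^{\frac{1}{2}}$ and the inverse square root $B^{-\frac{1}{2}}$ all exist and are self-adjoint and invertible, which is exactly what makes each step reversible.

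First I would reduce the norm to that of a positive operator. Putting $X=A^{\frac{1}{2}}B^{-\frac{1}{2}}$ and using $\left\| X \right\|^2=\left\| X^*X \right\|$ with $\left( A^{\frac{1}{2}} \right)^*=A^{\frac{1}{2}}$ and $\left( B^{-\frac{1}{2}} \right)^*=B^{-\frac{1}{2}}$, one obtains
\[
\left\| A^{\frac{1}{2}}B^{-\frac{1}{2}} \right\|^2=\left\| B^{-\frac{1}{2}}AB^{-\frac{1}{2}} \right\|.
\]
Hence $\left\| A^{\frac{1}{2}}B^{-\frac{1}{2}} \right\|\le \alpha^{\frac{1}{2}}$ is equivalent to $\left\| B^{-\frac{1}{2}}AB^{-\frac{1}{2}} \right\|\le \alpha$.

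Next I would invoke the elementary fact that for a positive operator $P$ one has $\left\| P \right\|\le \alpha$ if and only if $P\le \alpha I$ (the operator norm of a positive operator is the supremum of its spectrum). Applying this to the positive operator $P=B^{-\frac{1}{2}}AB^{-\frac{1}{2}}$ turns the norm bound into the operator inequality $B^{-\frac{1}{2}}AB^{-\frac{1}{2}}\le \alpha I$.

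Finally comes the congruence step: conjugating by the invertible self-adjoint operator $B^{\frac{1}{2}}$ shows that $B^{-\frac{1}{2}}AB^{-\frac{1}{2}}\le \alpha I$ holds if and only if $A\le \alpha B$. The forward direction is immediate by sandwiching the inequality between $B^{\frac{1}{2}}$ on each side; for the reverse one conjugates by $B^{-\frac{1}{2}}$, which is legitimate precisely because $B>0$ is invertible. Chaining these equivalences yields the claim. There is no genuine obstacle here; the only point demanding care is verifying that \emph{each} arrow in the chain is reversible, and this rests entirely on the hypotheses $A,B>0$, which guarantee that the relevant square roots exist, are invertible, and that congruence preserves order in both directions.
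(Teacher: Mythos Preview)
Your proof is correct. The paper does not supply its own proof of this lemma; it simply cites it from \cite{ba}, so there is no argument to compare against. The chain of equivalences you give---rewriting the norm via $\|X\|^2=\|X^*X\|$, using that $\|P\|\le\alpha\Leftrightarrow P\le\alpha I$ for positive $P$, and then conjugating by $B^{\pm 1/2}$---is the standard way to see this and each step is indeed reversible under the strict positivity hypotheses.
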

\begin{lemma}\label{6}
{{\cite[Theorem 1]{4}}} Let $A,B>0$. Then the following norm inequality holds:
\[\left\| AB \right\|\le \frac{1}{4}{{\left\| A+B \right\|}^{2}}.\]
\end{lemma}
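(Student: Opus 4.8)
The plan is to recognize the statement as the operator-norm arithmetic--geometric mean inequality: since $\frac14\|A+B\|^2=\left\|\frac{A+B}{2}\right\|^2$, the claim reads $\|AB\|\le\left\|\frac{A+B}{2}\right\|^2$. By homogeneity I rescale so that $\|A+B\|=2$ and reduce to proving $\|AB\|\le 1$. Because $AB$ is not self-adjoint I cannot work with its eigenvalues, so the first move is to pass to positive data through $\|AB\|^2=\|(AB)(AB)^*\|=\|AB^2A\|$, equivalently viewing $\|AB\|$ as an off-diagonal block norm inside $\binom{A}{B}(A\ B)=\begin{pmatrix}A^2&AB\\BA&B^2\end{pmatrix}\ge0$.

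The first thing I would try is the matrix AM--GM inequality $\|P^{1/2}Q^{1/2}\|\le\frac12\|P+Q\|$ applied to $P=A^2$, $Q=B^2$; since $P^{1/2}=A$ and $Q^{1/2}=B$, this gives $\|AB\|\le\frac12\|A^2+B^2\|$ with no slack on the product side. This is not enough: operator convexity of $t\mapsto t^2$ gives $\left(\frac{A+B}{2}\right)^2\le\frac{A^2+B^2}{2}$, hence $\frac14\|A+B\|^2\le\frac12\|A^2+B^2\|$, so the AM--GM bound sits strictly above the target. Exploiting the scaling freedom $AB=(tA)(t^{-1}B)$ improves this only to $\frac12\inf_{t>0}\|t^2A^2+t^{-2}B^2\|$, which already for rank-one projections $A,B$ exceeds $\frac14\|A+B\|^2$. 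So AM--GM cannot prove the claim, and closing this gap is the crux.

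To close it I would aim to establish the stronger fact that, with $C=\frac{A+B}{2}$, the block operator $\begin{pmatrix}C^2&AB\\BA&C^2\end{pmatrix}$ is positive, equivalently $\|C^{-1}ABC^{-1}\|\le1$; granting this, $\|AB\|=\|C\,(C^{-1}ABC^{-1})\,C\|\le\|C\|^2=\frac14\|A+B\|^2$ finishes immediately. The main obstacle is exactly this positivity: it genuinely uses the joint positivity of $A$ and $B$ and is inaccessible to submultiplicativity (indeed $\|C^{-1}A\|\,\|BC^{-1}\|>1$ in general), which is the analytic shadow of the non-normality of $AB$---note that the spectral-radius estimate $\rho(AB)=\|A^{1/2}B^{1/2}\|^2\le\frac14\|A+B\|^2$ is comparatively easy but strictly weaker, and passing from $\rho(AB)$ to the full $\|AB\|$ is where the real work lies. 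A reassuring check is that equality forces $A=B$, which pins down the constant $\frac14$.
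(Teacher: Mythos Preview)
The paper does not prove this lemma at all; it merely quotes it from Bhatia--Kittaneh \cite{4}. So there is no ``paper's own proof'' to compare against, and I can only assess your plan on its merits.

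Your diagnosis of why the naive route fails is correct: the bound $\|AB\|\le\tfrac12\|A^{2}+B^{2}\|$ sits on the wrong side of $\tfrac14\|A+B\|^{2}$, and optimizing over the scaling $AB=(tA)(t^{-1}B)$ does not close the gap. However, the ``stronger fact'' you then aim for,
\[
\begin{pmatrix} C^{2} & AB \\ BA & C^{2} \end{pmatrix}\ge 0,\qquad C=\tfrac{A+B}{2},
\]
equivalently $\|C^{-1}ABC^{-1}\|\le 1$, is \emph{false}. Take
\[
A=\begin{pmatrix}1&0\\0&0\end{pmatrix},\qquad B=\begin{pmatrix}1&1\\1&1\end{pmatrix}
\]
(or strictly positive perturbations thereof). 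Then $A+B=\begin{pmatrix}2&1\\1&1\end{pmatrix}$, $C^{-1}=2(A+B)^{-1}=\begin{pmatrix}2&-2\\-2&4\end{pmatrix}$, $AB=\begin{pmatrix}1&1\\0&0\end{pmatrix}$, and a direct computation gives
\[
C^{-1}ABC^{-1}=\begin{pmatrix}0&4\\0&-4\end{pmatrix},\qquad \bigl\|C^{-1}ABC^{-1}\bigr\|=4\sqrt{2}\gg 1.
\]
So the block matrix is not positive, and your deduction $\|AB\|=\|C(C^{-1}ABC^{-1})C\|\le\|C\|^{2}$ collapses. Note that the lemma itself still holds here ($\|AB\|=\sqrt{2}$ while $\tfrac14\|A+B\|^{2}=\tfrac{7+3\sqrt{5}}{8}\approx 1.71$), so the block positivity you conjectured is genuinely stronger than the lemma and cannot be true in general.

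In short, your proposal correctly isolates the difficulty but then tries to resolve it via an operator inequality that does not hold. A working proof must use a different mechanism; the Bhatia--Kittaneh argument proceeds through the $2\times 2$ block $\begin{pmatrix}A^{1/2}\\ B^{1/2}\end{pmatrix}\begin{pmatrix}A^{1/2}&B^{1/2}\end{pmatrix}$, which has the same nonzero spectrum as $A+B$, rather than through $C^{2}$ on the diagonal.
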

\vskip 0.4 true cm
We need the following inequality, which is due to Choi. It can be found, for example, in {{\cite[p. 41]{1}}}.
\begin{lemma}\label{7}
Let $A>0$. Then for every positive unital linear map $\Phi $,
\[{{\Phi }^{-1}}\left( A \right)\le \Phi \left( {{A}^{-1}} \right).\] 
\end{lemma}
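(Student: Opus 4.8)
The plan is to prove the equivalent operator inequality $\Phi(A)^{-1}\le\Phi(A^{-1})$. Note first that $A>0$ together with unitality forces $\Phi(A)>0$, so both sides are genuine strictly positive operators. The most transparent route is the $2\times 2$ block-matrix device: from the factorization
\[
\begin{pmatrix} A & I \\ I & A^{-1} \end{pmatrix}
=\begin{pmatrix} A^{\frac{1}{2}} \\ A^{-\frac{1}{2}} \end{pmatrix}
\begin{pmatrix} A^{\frac{1}{2}} & A^{-\frac{1}{2}} \end{pmatrix}\ge 0,
\]
one would like to apply $\Phi$ entrywise to arrive at
\[
\begin{pmatrix} \Phi(A) & I \\ I & \Phi(A^{-1}) \end{pmatrix}\ge 0,
\]
and then read off the claim from the Schur-complement criterion, which (since $\Phi(A)>0$) says precisely that this block operator is positive if and only if $\Phi(A^{-1})\ge\Phi(A)^{-1}$.

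The step I expect to be the real obstacle is exactly the phrase ``apply $\Phi$ entrywise'': the inflation $\Phi\otimes\mathrm{id}_{M_2}$ preserves positivity only when $\Phi$ is $2$-positive, whereas here $\Phi$ is merely positive. This is not a cosmetic gap, since positive maps that fail to be $2$-positive genuinely exist, so the naive argument must be repaired. My repair is to restrict attention to the commutative $C^*$-subalgebra $\mathcal{A}$ generated by $A$ and $I$; the operators $A^{\frac{1}{2}},A^{-\frac{1}{2}},A^{-1},I$ all lie in $\mathcal{A}$, so the first displayed block matrix is already positive in $M_2(\mathcal{A})$. The key point is that a unital positive linear map out of a \emph{commutative} $C^*$-algebra is automatically completely positive (Stinespring), hence the restriction $\Phi|_{\mathcal{A}}$ is $2$-positive, and applying it entrywise to a positive element of $M_2(\mathcal{A})$ is now legitimate. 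This delivers the second display, and the Schur complement finishes the proof.

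An equivalent and more hands-on way to see the same thing, which also explains why the function $t\mapsto t^{-1}$ is special, is to use the spectral decomposition $A=\sum_i\lambda_i P_i$ and set $Q_i:=\Phi(P_i)\ge 0$, so that $\sum_i Q_i=I$ while $\Phi(A)=\sum_i\lambda_i Q_i$ and $\Phi(A^{-1})=\sum_i\lambda_i^{-1}Q_i$. The claim becomes the discrete operator Jensen inequality $\bigl(\sum_i\lambda_i Q_i\bigr)^{-1}\le\sum_i\lambda_i^{-1}Q_i$, which is an expression of the operator convexity of $t\mapsto t^{-1}$ on $(0,\infty)$: when the $Q_i$ commute it reduces to the scalar convexity of $1/t$, and the commutative-subalgebra observation above is exactly what licenses the passage to the non-commuting case. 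Either way, the crux is circumventing the lack of $2$-positivity by exploiting that everything in sight is generated by the single operator $A$.
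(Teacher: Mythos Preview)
The paper does not prove this lemma at all: it simply attributes the inequality to Choi and points to Bhatia's \emph{Positive Definite Matrices}, p.~41. So there is no in-paper argument to compare against.

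Your first argument is correct and is essentially the standard proof one finds in Bhatia. You rightly flag that passing from
\[
\begin{pmatrix} A & I \\ I & A^{-1} \end{pmatrix}\ge 0
\quad\text{to}\quad
\begin{pmatrix} \Phi(A) & I \\ I & \Phi(A^{-1}) \end{pmatrix}\ge 0
\]
would in general demand $2$-positivity, and your repair---restricting $\Phi$ to the commutative $C^*$-subalgebra generated by $A$, where positivity automatically upgrades to complete positivity---is exactly the right move. The Schur-complement step then finishes it cleanly.

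Your second sketch is looser and, as written, has a gap. The discrete spectral sum $A=\sum_i\lambda_iP_i$ is only literally available when $A$ has pure point spectrum; in general one needs the integral spectral theorem. More seriously, the last sentence does not work: the $Q_i=\Phi(P_i)$ live in the \emph{range} of $\Phi$ and need not commute, so the ``commutative-subalgebra observation'' (which concerns the \emph{domain}) does nothing for you here. The inequality $\bigl(\sum_i\lambda_iQ_i\bigr)^{-1}\le\sum_i\lambda_i^{-1}Q_i$ for non-commuting positive $Q_i$ with $\sum_iQ_i=I$ is true, but it \emph{is} the operator convexity of $t\mapsto t^{-1}$, which needs its own proof; invoking it this way is close to circular. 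Keep the first argument and drop the second, or else cite operator convexity of $t^{-1}$ as an independent input rather than trying to derive it on the spot.
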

\vskip 0.4 true cm
The following is attributed to Ando \cite{5}:
\begin{lemma}\label{11}
Let $\Phi $ be any (not necessary unital) positive linear map and $A,B$ be positive operators. Then for every $\nu \in \left[ 0,1 \right]$, 
\begin{equation*}
\Phi \left( A{{\#}_{\nu }}B \right)\le \Phi \left( A \right){{\#}_{\nu }}\Phi \left( B \right).
\end{equation*}
\end{lemma}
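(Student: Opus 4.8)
The plan is to prove this transformer inequality in three moves: reduce to a unital map, use the integral representation of $t^{\nu}$ to replace the weighted geometric mean by parallel sums, and then settle the inequality for the parallel sum, which is where the real difficulty sits. I first remove the non-unital hypothesis. Put $P=\Phi(I)\ge 0$; after a harmless replacement of $\Phi$ by $\Phi(\cdot)+\varepsilon I$ we may take $P$ invertible. Then $\Psi(X):=P^{-1/2}\Phi(X)P^{-1/2}$ is a unital positive linear map. Since the weighted geometric mean is congruence covariant, $T^{*}(X\#_{\nu}Y)T=(T^{*}XT)\#_{\nu}(T^{*}YT)$ for invertible $T$, choosing $T=P^{1/2}$ gives $\Phi(A)\#_{\nu}\Phi(B)=P^{1/2}\bigl(\Psi(A)\#_{\nu}\Psi(B)\bigr)P^{1/2}$ and $\Phi(A\#_{\nu}B)=P^{1/2}\Psi(A\#_{\nu}B)P^{1/2}$. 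As conjugation by $P^{1/2}$ preserves the operator order, the desired inequality for $\Phi$ follows from the same inequality for the unital map $\Psi$, so we may assume $\Phi$ is unital.

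Next, for $\nu\in(0,1)$ I would insert the representation $t^{\nu}=\frac{\sin \nu\pi}{\pi}\int_{0}^{\infty}\frac{\lambda^{\nu-1}t}{\lambda+t}\,d\lambda$ into $A\#_{\nu}B=A^{1/2}(A^{-1/2}BA^{-1/2})^{\nu}A^{1/2}$. A short computation identifies each integrand with a positive scalar multiple of the parallel sum $(\lambda A):B:=\bigl((\lambda A)^{-1}+B^{-1}\bigr)^{-1}$, so that $A\#_{\nu}B$ becomes a positively weighted integral of parallel sums, and the same weights represent $\Phi(A)\#_{\nu}\Phi(B)$. Because $\Phi$ is linear and norm continuous it passes through the integral, so the whole statement collapses to the single transformer inequality for the parallel sum,
\[
\Phi\bigl((\lambda A):B\bigr)\le(\lambda\Phi(A)):\Phi(B)\qquad(\lambda>0),
\]
the endpoint cases $\nu\in\{0,1\}$ being trivial.

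The main obstacle is exactly this parallel-sum step. The parallel sum admits the extremal description $A:B=\max\{\,Z=Z^{*}\mid \left(\begin{smallmatrix}A&0\\0&B\end{smallmatrix}\right)\ge\left(\begin{smallmatrix}Z&Z\\Z&Z\end{smallmatrix}\right)\,\}$, equivalently the quadratic-form identity $\langle(A:B)x,x\rangle=\min_{y+z=x}\bigl(\langle Ay,y\rangle+\langle Bz,z\rangle\bigr)$. The tempting shortcut---feeding the block inequality that defines $Z=A:B$ into $\Phi$ entrywise---is the amplification $\Phi\otimes\mathrm{id}_{2}$ and therefore demands $2$-positivity, which we have not assumed; this is precisely the gap that makes the statement a genuine theorem rather than a formality. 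To handle a merely positive $\Phi$ I would argue from the tangent inequality $A:B\le(I-T)^{*}A(I-T)+T^{*}BT$, valid for every operator $T$ with equality at $T=(A+B)^{-1}A$: applying $\Phi$ and estimating the resulting cross terms through a generalized Schwarz inequality for positive unital maps (the self-adjoint Kadison inequality together with Choi's inequality, Lemma \ref{7}) should yield $\Phi((\lambda A):B)\le(\lambda\Phi(A)):\Phi(B)$. Controlling these cross terms without $2$-positivity is the delicate heart of the argument. Once the parallel-sum inequality is in hand, integrating it against the positive weight coming from the representation of $t^{\nu}$ returns $\Phi(A\#_{\nu}B)\le\Phi(A)\#_{\nu}\Phi(B)$.
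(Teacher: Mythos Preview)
The paper does not prove this lemma; it merely attributes the result to Ando and cites \cite{5}. So there is no proof in the paper to compare against, and I can only comment on your argument on its own terms.

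Your reduction to a unital map is correct and standard. The route through the integral representation of $t^{\nu}$ and parallel sums is a legitimate strategy, but the gap you yourself flag is real and is not closed by what you wrote. The step $\Phi((\lambda A):B)\le(\lambda\Phi(A)):\Phi(B)$ for a merely positive map is exactly the hard part, and your proposed remedy via the tangent inequality together with the Kadison--Schwarz inequality does not go through as stated: the Kadison inequality $\Phi(X)^{2}\le\Phi(X^{2})$ for unital positive maps applies only to \emph{self-adjoint} (more generally normal) $X$, whereas the cross terms produced by the tangent formula involve $T_{0}=(A+B)^{-1}A$, which is not self-adjoint. So the ``delicate heart'' you identify remains open in your proposal.

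There is a much shorter argument that bypasses the parallel sum entirely and needs only positivity. After reducing to $\Phi$ unital, set $C=A^{-1/2}BA^{-1/2}$ and introduce the auxiliary unital positive linear map
\[
\Psi(X)=\Phi(A)^{-1/2}\,\Phi\bigl(A^{1/2}XA^{1/2}\bigr)\,\Phi(A)^{-1/2}.
\]
A direct computation gives $\Phi(A\#_{\nu}B)=\Phi(A)^{1/2}\Psi(C^{\nu})\Phi(A)^{1/2}$ and $\Phi(A)\#_{\nu}\Phi(B)=\Phi(A)^{1/2}\bigl(\Psi(C)\bigr)^{\nu}\Phi(A)^{1/2}$, so the claim reduces to $\Psi(C^{\nu})\le\bigl(\Psi(C)\bigr)^{\nu}$. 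This is the Davis--Choi--Jensen inequality for the operator concave function $t\mapsto t^{\nu}$ on $(0,\infty)$, and it holds for every unital positive linear map: no $2$-positivity is needed because $\Psi$ restricted to the commutative $C^{*}$-algebra generated by $C$ is automatically completely positive. Lemma~\ref{7} in the paper is exactly the special case $f(t)=t^{-1}$ of this principle, so the tool was already at hand.
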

\vskip 0.4 true cm
The following basic lemma is essentially known as in {{\cite[Theorem 7]{2}}}, but our expression is a little bit different from those in \cite{2}. For the sake of convenience, we give it a slim proof.
\begin{lemma}\label{1}
Let $0<m\le B\le m'<M'\le A\le M$ or $0<m\le A\le m'<M'\le B\le M$, then 
\begin{equation}\label{33}
{{K}^{r}}\left( h' \right)\left( {{A}^{-1}}{{\#}_{\nu }}{{B}^{-1}} \right)\le {{A}^{-1}}{{\nabla }_{\nu }}{{B}^{-1}},
\end{equation}
for each $\nu \in \left[ 0,1 \right]$ and $r=\min \left\{ \nu ,1-\nu \right\}$, $h=\frac{M}{m}$ and $h'=\frac{M'}{m'}$. 
\end{lemma}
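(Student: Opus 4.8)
The plan is to reduce the operator inequality \eqref{33} to a scalar estimate by a congruence transformation, and then feed into it a scalar refined Young inequality together with the monotonicity of the Kantorovich constant $K$. The scalar ingredient I would record first is the normalized refined Young inequality from \cite[Theorem 7]{2}: for $t>0$ and $r=\min\{\nu,1-\nu\}$,
\[
(1-\nu)+\nu t\ \ge\ K^{r}(t)\,t^{\nu},\qquad K(t)=\frac{(t+1)^{2}}{4t},
\]
which is sharp at $\nu=\tfrac12$ and at $\nu\in\{0,1\}$. Alongside it I would note the two elementary facts $K(t)=K(1/t)$ and that $K$ is strictly increasing on $(1,\infty)$ (write $K(t)=\tfrac{t}{4}+\tfrac12+\tfrac{1}{4t}$, whose derivative $\tfrac{t^{2}-1}{4t^{2}}$ is positive there); in particular $K(t)\ge 1$ and $K$ attains its minimum over an interval at the point nearest to $1$.

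Next I would set $S=A^{1/2}B^{-1}A^{1/2}$ and use congruence-invariance of the two means to compute
$A^{-1}\#_{\nu}B^{-1}=A^{-1/2}S^{\nu}A^{-1/2}$ and $A^{-1}\nabla_{\nu}B^{-1}=A^{-1/2}\big((1-\nu)I+\nu S\big)A^{-1/2}$.
Conjugating both sides of \eqref{33} by the positive invertible operator $A^{1/2}$ therefore makes it equivalent to the single-variable operator statement
$K^{r}(h')\,S^{\nu}\le(1-\nu)I+\nu S$. The crux is then to localize the spectrum of $S$. In the first case $0<m\le B\le m'<M'\le A\le M$, from $\tfrac{1}{m'}I\le B^{-1}\le\tfrac{1}{m}I$ and $M'I\le A\le MI$ one reads off $h'I\le S\le hI$; in the second case $0<m\le A\le m'<M'\le B\le M$ the same computation gives $\tfrac1h I\le S\le\tfrac{1}{h'}I$. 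In either case every spectral value $t$ of $S$ satisfies $K(t)\ge K(h')$ — in the first because $t\ge h'>1$ and $K$ increases there, in the second because $t\le\tfrac1{h'}<1$ together with $K(1/h')=K(h')$ — and since $r\ge0$ this upgrades to $K^{r}(t)\ge K^{r}(h')$.

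Combining the scalar inequality with this uniform bound yields, for every $t\in\mathrm{sp}(S)$,
\[
(1-\nu)+\nu t\ \ge\ K^{r}(t)\,t^{\nu}\ \ge\ K^{r}(h')\,t^{\nu}.
\]
Applying the functional calculus to $S$ gives $(1-\nu)I+\nu S\ge K^{r}(h')\,S^{\nu}$, and conjugating back by $A^{-1/2}$ recovers \eqref{33}. I expect the main obstacle to be the spectral localization of $S$: one must extract the correct two-sided operator bounds from the ordering hypotheses so that the relevant spectral interval lies \emph{entirely} on one side of $1$. That one-sidedness is precisely what lets the monotonicity of $K$ replace the pointwise factor $K^{r}(t)$ by the fixed constant $K^{r}(h')$; once it is in place, the passage between the scalar and operator levels is routine.
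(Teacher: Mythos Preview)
Your proof is correct and follows essentially the same route as the paper: reduce to a scalar statement by conjugating with $A^{1/2}$, apply the refined scalar Young inequality $(1-\nu)+\nu t\ge K^{r}(t)\,t^{\nu}$ via functional calculus to $S=A^{1/2}B^{-1}A^{1/2}$, localize $\mathrm{sp}(S)\subset[h',h]$ (or $[1/h,1/h']$ in the second case), and use the monotonicity and symmetry of $K$ to replace $K^{r}(t)$ by $K^{r}(h')$. Your treatment is in fact a bit more careful than the paper's in spelling out the second case and the congruence identities for the means, but the argument is the same.
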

\begin{proof}
From {{\cite[Corollary 3]{2}}}, for any $a,b>0$ and $\nu \in \left[ 0,1 \right]$ we have
\begin{equation}\label{23}
{{K}^{r}}\left( h \right){{a}^{1-\nu }}{{b}^{\nu }}\le a{{\nabla }_{\nu }}b,
\end{equation}
where $h=\frac{b}{a}$ and $r=\min \left\{ \nu ,1-\nu \right\}$. 
Taking $a=1$ and $b=x$, then utilizing the continuous functional calculus and the fact that $0<h'I\le X\le hI$, we have
\[\underset{h'\le x\le h}{\mathop{\min }}\,{{K}^{r}}\left( x \right){{X}^{\nu }}\le \left( 1-\nu \right)I+\nu X,\] 
for any $x>0$. 
Replacing $X$ with $1<h'=\frac{M'}{m'}\le {{A}^{\frac{1}{2}}}{{B}^{-1}}{{A}^{\frac{1}{2}}}\le \frac{M}{m}=h$ 
\[\underset{h'\le x\le h}{\mathop{\min }}\,{{K}^{r}}\left( x \right){{\left( {{A}^{\frac{1}{2}}}{{B}^{-1}}{{A}^{\frac{1}{2}}} \right)}^{\nu }}\le \left( 1-\nu \right)I+\nu {{A}^{\frac{1}{2}}}{{B}^{-1}}{{A}^{\frac{1}{2}}}.\]
Since $K\left( x \right)$ is an increasing function for $x>1$, then 
\[{{K}^{r}}\left( h' \right){{\left( {{A}^{\frac{1}{2}}}{{B}^{-1}}{{A}^{\frac{1}{2}}} \right)}^{\nu }}\le \left( 1-\nu \right)I+\nu {{A}^{\frac{1}{2}}}{{B}^{-1}}{{A}^{\frac{1}{2}}}.\] 
Now by multiplying both sides by ${{A}^{-\frac{1}{2}}}$ we deduce the desired inequality \eqref{33}.

For the case of  $0<m\le A\le m'<M'\le B\le M$, since the function $K\left( x \right)$ is decreasing for $x<1$  and $K\left( x \right)=K\left( \frac{1}{x} \right)$, similarly we obtain inequality \eqref{33}.
\end{proof}
\vskip 0.4 true cm
Inequalities \eqref{21} and \eqref{22} can be generalized by means of weighted parameter $\nu \in \left[ 0,1 \right]$ as follows:
\begin{theorem}\label{5}
If $0<m\le B\le m'<M'\le A\le M$ or $0<m\le A\le m'<M'\le B\le M$, then for each $\nu \in \left[ 0,1 \right]$ we have
\begin{equation}\label{10}
{{\Phi }^{2}}\left( A{{\nabla }_{\nu }}B \right)\le {{\left( \frac{K\left( h \right)}{{{K}^{r}}\left( h' \right)} \right)}^{2}}{{\Phi }^{2}}\left( A{{\#}_{\nu }}B \right),
\end{equation}
and
\begin{equation}\label{12}
{{\Phi }^{2}}\left( A{{\nabla }_{\nu }}B \right)\le {{\left( \frac{K\left( h \right)}{{{K}^{r}}\left( h' \right)} \right)}^{2}}{{\left( \Phi \left( A \right){{\#}_{\nu }}\Phi \left( B \right) \right)}^{2}},
\end{equation}
where $r=\min \left\{ \nu ,1-\nu \right\}$, $K\left( h \right)=\frac{{{\left( h+1 \right)}^{2}}}{4h},K\left( h' \right)=\frac{{{\left( h'+1 \right)}^{2}}}{4h'},h=\frac{M}{m},h'=\frac{M'}{m'}$. 
\end{theorem}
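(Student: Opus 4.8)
The plan is to reduce each squared operator inequality to a scalar-calibrated norm inequality via Lemma \ref{50}, and then to bound that norm by Lemma \ref{6}. Since $\Phi$ is positive and unital, the hypothesis $mI\le A,B\le MI$ passes to $\Phi(A),\Phi(B)$, and by Lemma \ref{50} the inequality \eqref{10} is equivalent to
\[\left\|\Phi(A\nabla_\nu B)\,\Phi^{-1}(A\#_\nu B)\right\|\le \frac{K(h)}{K^r(h')}.\]
The engine for the norm estimate is Lemma \ref{6} applied after inserting a scalar gauge: for any $\lambda>0$ one has $\|XY\|=\|(\lambda^{-1}X)(\lambda Y)\|\le \tfrac14\|\lambda^{-1}X+\lambda Y\|^2$, and the whole trick is to choose $\lambda$ so that the Kantorovich constant emerges.

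Next I would estimate the operator $\Phi(A\nabla_\nu B)+\lambda\Phi^{-1}(A\#_\nu B)$ with the choice $\lambda=Mm\,K^r(h')$. Using Choi's inequality (Lemma \ref{7}) together with the identity $(A\#_\nu B)^{-1}=A^{-1}\#_\nu B^{-1}$ gives $\Phi^{-1}(A\#_\nu B)\le \Phi(A^{-1}\#_\nu B^{-1})$; then Lemma \ref{1} in the form $A^{-1}\#_\nu B^{-1}\le \tfrac{1}{K^r(h')}(A^{-1}\nabla_\nu B^{-1})$ absorbs exactly the factor $K^r(h')$ hidden in $\lambda$, leaving
\[\Phi(A\nabla_\nu B)+\lambda\Phi^{-1}(A\#_\nu B)\le \Phi\!\left(A\nabla_\nu B+Mm\,(A^{-1}\nabla_\nu B^{-1})\right).\]
The key scalar fact is that $x+Mm\,x^{-1}\le M+m$ for every $x\in[m,M]$, which by functional calculus yields $A+MmA^{-1}\le (M+m)I$ and $B+MmB^{-1}\le (M+m)I$; taking the $\nu$-weighted combination gives $A\nabla_\nu B+Mm(A^{-1}\nabla_\nu B^{-1})\le (M+m)I$. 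As $\Phi$ is unital, the displayed operator is $\le (M+m)I$, so its norm is at most $M+m$, and Lemma \ref{6} delivers $\tfrac{(M+m)^2}{4\lambda}=\tfrac{(M+m)^2}{4Mm\,K^r(h')}=\tfrac{K(h)}{K^r(h')}$, which is \eqref{10}.

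For \eqref{12} I would reduce to the same expression. By Ando's inequality (Lemma \ref{11}), $\Phi(A\#_\nu B)\le \Phi(A)\#_\nu\Phi(B)$, and inverting reverses the order, so $(\Phi(A)\#_\nu\Phi(B))^{-1}\le \Phi^{-1}(A\#_\nu B)$. Hence, again via Lemma \ref{50} and Lemma \ref{6} with the same $\lambda$,
\[\left\|\Phi(A\nabla_\nu B)(\Phi(A)\#_\nu\Phi(B))^{-1}\right\|\le \frac{1}{4\lambda}\left\|\Phi(A\nabla_\nu B)+\lambda\Phi^{-1}(A\#_\nu B)\right\|^2,\]
and the bracket is precisely the operator bounded above by $(M+m)I$, so the same constant $K(h)/K^r(h')$ results.

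I expect the one genuinely delicate point to be the bookkeeping of the constant: the crude bounds $\Phi(A\nabla_\nu B)\le MI$ and $\Phi^{-1}(A\#_\nu B)\le m^{-1}I$ only produce the weaker factor $h$, so the gain to the sharp Kantorovich constant $K(h)$ hinges on keeping the two summands coupled through $x+Mm\,x^{-1}\le M+m$ and on selecting $\lambda=Mm\,K^r(h')$ so that Lemma \ref{1} cancels the spurious $K^r(h')$. Everything else is routine functional calculus.
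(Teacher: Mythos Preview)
Your proposal is correct and follows essentially the same route as the paper: reduce via Lemma~\ref{50} to a norm bound, insert the scalar $Mm\,K^{r}(h')$ so that Lemma~\ref{6} applies, then chain Lemma~\ref{7}, Lemma~\ref{1}, and the scalar inequality $x+Mm\,x^{-1}\le M+m$ on $[m,M]$ to reach $\tfrac14(M+m)^{2}$; for \eqref{12} you invoke Lemma~\ref{11} exactly as the paper does. The only cosmetic slip is that you first display $\tfrac14\|\lambda^{-1}X+\lambda Y\|^{2}$ but then actually estimate $\|X+\lambda Y\|$ and divide by $4\lambda$; both parametrizations are valid, so this does not affect the argument.
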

\begin{proof}
According to the hypothesis we have
\[A+Mm{{A}^{-1}}\le M+m,\] 
and
\[B+Mm{{B}^{-1}}\le M+m.\] 
Also the following inequalities holds true
\begin{equation}\label{2}
\left( 1-\nu \right)A+\left( 1-\nu \right)Mm{{A}^{-1}}\le \left( 1-\nu \right)M+\left( 1-\nu \right)m,
\end{equation}
and
\begin{equation}\label{3}
\nu B+\nu Mm{{B}^{-1}}\le \nu M+\nu m.
\end{equation}
Now summing up \eqref{2} and \eqref{3} we obtain
\[A{{\nabla }_{\nu }}B+Mm\left( {{A}^{-1}}{{\nabla }_{\nu }}{{B}^{-1}} \right)\le M+m.\]
Applying positive linear map $\Phi $ we can write 
\begin{equation}\label{4}
\Phi \left( A{{\nabla }_{\nu }}B \right)+Mm\Phi \left( {{A}^{-1}}{{\nabla }_{\nu }}{{B}^{-1}} \right)\le M+m.
\end{equation}
With inequality \eqref{4} in hand, we are ready to prove \eqref{10}. 

By Lemma \ref{50}, it is enough to prove that 
\[\left\| \Phi \left( A{{\nabla }_{\nu }}B \right){{\Phi }^{-1}}\left( A{{\#}_{\nu }}B \right) \right\|\le \frac{K\left( h \right)}{{{K}^{r}}\left( h' \right)}.\]
By computation, we have
\begin{align}
\nonumber & \left\| \Phi \left( A{{\nabla }_{\nu }}B \right)Mm{{K}^{r}}\left( h' \right){{\Phi }^{-1}}\left( A{{\#}_{\nu }}B \right) \right\| \\ \nonumber
& \le \frac{1}{4}{{\left\| \Phi \left( A{{\nabla }_{\nu }}B \right)+Mm{{K}^{r}}\left( h' \right){{\Phi }^{-1}}\left( A{{\#}_{\nu }}B \right) \right\|}^{2}} \quad \text{(by Lemma \ref{6})}\\ \nonumber
& \le \frac{1}{4}{{\left\| \Phi \left( A{{\nabla }_{\nu }}B \right)+Mm{{K}^{r}}\left( h' \right)\Phi \left( {{A}^{-1}}{{\#}_{\nu }}{{B}^{-1}} \right) \right\|}^{2}} \quad \text{(by Lemma \ref{7})}\\ \nonumber
& \le \frac{1}{4}{{\left\| \Phi \left( A{{\nabla }_{\nu }}B \right)+Mm\Phi \left( {{A}^{-1}}{{\nabla }_{\nu }}{{B}^{-1}} \right) \right\|}^{2}} \quad \text{(by Lemma \ref{1})}\\ 
& \le \frac{1}{4}{{\left( M+m \right)}^{2}} \quad \text{(by \eqref{4})} \label{13},
\end{align}
which leads to \eqref{10}.
\vskip 0.2 true cm
Now we prove \eqref{12}. The operator inequality \eqref{12} is equivalent to
\[\left\| \Phi \left( A{{\nabla }_{\nu }}B \right){{\left( \Phi \left( A \right){{\#}_{\nu }}\Phi \left( B \right) \right)}^{-1}} \right\|\le \frac{K\left( h \right)}{{{K}^{r}}\left( h' \right)}.\]
Compute
\[\begin{aligned}
& \left\| \Phi \left( A{{\nabla }_{\nu }}B \right)Mm{{K}^{r}}\left( h' \right){{\left( \Phi \left( A \right){{\#}_{\nu }}\Phi \left( B \right) \right)}^{-1}} \right\| \\ 
& \le \frac{1}{4}{{\left\| \Phi \left( A{{\nabla }_{\nu }}B \right)+Mm{{K}^{r}}\left( h' \right){{\left( \Phi \left( A \right){{\#}_{\nu }}\Phi \left( B \right) \right)}^{-1}} \right\|}^{2}}\quad \text{(by Lemma \ref{6})} \\ 
& \le \frac{1}{4}{{\left\| \Phi \left( A{{\nabla }_{\nu }}B \right)+Mm{{K}^{r}}\left( h' \right){{\Phi }^{-1}}\left( A{{\#}_{\nu }}B \right) \right\|}^{2}} \quad \text{(by Lemma \ref{11})}\\ 
& \le \frac{1}{4}{{\left( M+m \right)}^{2}} \quad \text{(by \eqref{13})}.\\ 
\end{aligned}\]
Thus, we complete the proof.
\end{proof}
\begin{remark}
Inequalities \eqref{21} and \eqref{22} are two special cases of Theorem \ref{5} by taking $\nu =\frac{1}{2}$. 
However, our inequalities in Theorem \ref{5} are tighter than that in \eqref{14} and \eqref{15}.
\end{remark}
\vskip 0.4 true cm
To achieve the second result, we state for easy reference the following fact obtaining from {{\cite[Theorem 3]{5}}} that will be applied below.
\begin{lemma}\label{8}
Let $A$ and $B$ be positive operators. Then 
\[\left\| {{A}^{r}}+{{B}^{r}} \right\|\le \left\| {{\left( A+B \right)}^{r}} \right\|,\] 
for each $1\le r<\infty $.
\end{lemma}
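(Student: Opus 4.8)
The plan is to convert the stated norm inequality into a single operator inequality of the form $A^{r}+B^{r}\le \|A+B\|^{r}I$, from which the conclusion is immediate. The starting observation is that, since $A+B$ is positive, $\left\|(A+B)^{r}\right\|$ is simply the $r$-th power of the top of the spectrum of $A+B$; that is, $\left\|(A+B)^{r}\right\|=\left\|A+B\right\|^{r}$. Writing $\lambda=\left\|A+B\right\|$, it therefore suffices to prove $\left\|A^{r}+B^{r}\right\|\le\lambda^{r}$, and since $A^{r}+B^{r}\ge 0$ this is equivalent to the operator inequality $A^{r}+B^{r}\le\lambda^{r}I$.

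To establish the latter I would first record that $0\le A\le A+B\le\lambda I$ and likewise $0\le B\le\lambda I$, so that the spectra of both $A$ and $B$ lie in $[0,\lambda]$. The key ingredient is the elementary scalar estimate $t^{r}\le\lambda^{r-1}t$, valid for all $t\in[0,\lambda]$ whenever $r\ge 1$ (indeed $t^{r}=t\cdot t^{r-1}\le t\cdot\lambda^{r-1}$, since $t^{r-1}$ is nondecreasing and $t\le\lambda$). Feeding this scalar inequality into the continuous functional calculus for the positive operators $A$ and $B$ yields $A^{r}\le\lambda^{r-1}A$ and $B^{r}\le\lambda^{r-1}B$. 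Summing and invoking $A+B\le\lambda I$ once more gives
\[A^{r}+B^{r}\le\lambda^{r-1}(A+B)\le\lambda^{r-1}\cdot\lambda I=\lambda^{r}I,\]
which is exactly what was needed; the case $r=1$ is of course an equality.

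The one point that genuinely requires care, and which I regard as the conceptual obstacle, is the temptation to prove the stronger operator inequality $A^{r}+B^{r}\le(A+B)^{r}$. This fails in general once $r>1$ and the operators do not commute, because the scalar superadditivity $a^{r}+b^{r}\le(a+b)^{r}$ does not lift to the operator level. The resolution is to exploit that the right-hand side enters only through its norm: since $\left\|(A+B)^{r}\right\|=\lambda^{r}$, it is enough to dominate $A^{r}+B^{r}$ by the \emph{scalar} operator $\lambda^{r}I$ rather than by $(A+B)^{r}$ itself, and this is precisely what the functional-calculus step above accomplishes. Everything else is routine order-preservation, and no appeal to the earlier lemmas of this section is required.
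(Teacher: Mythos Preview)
Your proof is correct. Note, however, that the paper does not actually supply a proof of this lemma: it is quoted as a known result from Ando and Zhan \cite[Theorem~3]{5}, so there is no ``paper's own proof'' to compare against step by step.

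That said, a brief comparison is still worthwhile. The Ando--Zhan theorem is considerably more general (it concerns norm inequalities tied to operator monotone functions and yields this statement as a special case), whereas your argument is entirely elementary and self-contained: you simply observe that $\|(A+B)^r\|=\|A+B\|^r$ for positive $A+B$, and then use the scalar bound $t^r\le\lambda^{r-1}t$ on $[0,\lambda]$ together with functional calculus to get $A^r+B^r\le\lambda^{r-1}(A+B)\le\lambda^r I$. This is a clean shortcut that avoids any appeal to operator monotonicity or majorization machinery. Your closing remark is also well taken: the operator inequality $A^r+B^r\le(A+B)^r$ is indeed false for noncommuting $A,B$ when $r>1$, so reducing the right-hand side to the scalar $\lambda^r I$ is exactly the right move here.
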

Our promised refinement of inequalities \eqref{16} and \eqref{17} can be stated as follows.
\begin{theorem}\label{20}
If $0<m\le B\le m'<M'\le A\le M$ or $0<m\le A\le m'<M'\le B\le M$, then for each $2\le p<\infty $ and $\nu \in \left[ 0,1 \right]$ we have
\begin{equation}\label{18}
{{\Phi }^{p}}\left( A{{\nabla }_{\nu }}B \right)\le {{\left( \frac{K\left( h \right)}{{{4}^{\frac{2}{p}-1}}{{K}^{r}}\left( h' \right)} \right)}^{p}}{{\Phi }^{p}}\left( A{{\#}_{\nu }}B \right),
\end{equation} 
and
\begin{equation}\label{19}
{{\Phi }^{p}}\left( A{{\nabla }_{\nu }}B \right)\le {{\left( \frac{K\left( h \right)}{{{4}^{\frac{2}{p}-1}}{{K}^{r}}\left( h' \right)} \right)}^{p}}{{\left( \Phi \left( A \right){{\#}_{\nu }}\Phi \left( B \right) \right)}^{p}},
\end{equation}
where $r=\min \left\{ \nu ,1-\nu \right\}$, $h=\frac{M}{m}$, $h'=\frac{M'}{m'}$.
\end{theorem}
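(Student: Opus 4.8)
The plan is to lift the $p=2$ argument of Theorem~\ref{5} to general $p\ge 2$, the engine being a $p$-power analogue of Lemma~\ref{6}: for positive operators $X,Y$ and $p\ge 2$,
\begin{equation*}
\left\| X^{p/2}Y^{p/2}\right\|\le\tfrac14\left\| X+Y\right\|^{p}.
\end{equation*}
To establish this I would apply Lemma~\ref{6} to the positive operators $X^{p/2}$ and $Y^{p/2}$, obtaining $\|X^{p/2}Y^{p/2}\|\le\frac14\|X^{p/2}+Y^{p/2}\|^2$, and then invoke Lemma~\ref{8} with exponent $r=p/2\ge1$ together with $\|(X+Y)^{p/2}\|=\|X+Y\|^{p/2}$ to get $\|X^{p/2}+Y^{p/2}\|^2\le\|X+Y\|^p$. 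This is precisely where the hypothesis $p\ge 2$ is consumed, since Lemma~\ref{8} requires $r\ge 1$.

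Next, by Lemma~\ref{50} the target \eqref{18} is equivalent to the norm bound
\begin{equation*}
\left\| \Phi^{p/2}(A\nabla_\nu B)\,\Phi^{-p/2}(A\#_\nu B)\right\|\le\left(\frac{K(h)}{4^{\frac2p-1}K^r(h')}\right)^{p/2}.
\end{equation*}
I would apply the displayed $p$-power inequality with $X=\Phi(A\nabla_\nu B)$ and $Y=Mm\,K^r(h')\,\Phi^{-1}(A\#_\nu B)$. The left-hand side becomes $(Mm\,K^r(h'))^{p/2}\,\|\Phi^{p/2}(A\nabla_\nu B)\,\Phi^{-p/2}(A\#_\nu B)\|$, while the right-hand side is $\frac14\|X+Y\|^p$. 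The crucial observation is that the bound $\|X+Y\|\le M+m$ has already been proved inside the chain \eqref{13} of Theorem~\ref{5} (via Lemmas~\ref{7} and~\ref{1} and inequality \eqref{4}), so I may quote it verbatim. Rearranging and using $\frac{(M+m)^2}{Mm}=4K(h)$ collapses the constant to exactly $4^{p/2-1}\bigl(K(h)/K^r(h')\bigr)^{p/2}$, which coincides with the asserted right-hand side since $4^{(\frac2p-1)\frac p2}=4^{1-\frac p2}$.

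For \eqref{19} I would run the same scheme with $Y=Mm\,K^r(h')\bigl(\Phi(A)\#_\nu\Phi(B)\bigr)^{-1}$. By Ando's Lemma~\ref{11} we have $\Phi(A\#_\nu B)\le\Phi(A)\#_\nu\Phi(B)$, hence $\bigl(\Phi(A)\#_\nu\Phi(B)\bigr)^{-1}\le\Phi^{-1}(A\#_\nu B)$; the new $X+Y$ is therefore dominated in the operator order by the one already controlled, so $\|X+Y\|\le M+m$ still holds by norm monotonicity, and the remaining computation is identical. The only genuinely new ingredient is the $p$-power form of Lemma~\ref{6}; once it is in place the main obstacle is purely bookkeeping—keeping the factors $Mm$, $K^r(h')$ and the power $4^{\frac2p-1}$ aligned—and I expect no conceptual difficulty, since every other step merely re-invokes the estimates already assembled for Theorem~\ref{5}.
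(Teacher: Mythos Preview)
Your proposal is correct and is essentially identical to the paper's own proof: the paper also applies Lemma~\ref{6} to $\Phi^{p/2}(A\nabla_\nu B)$ and $(MmK^r(h'))^{p/2}\Phi^{-p/2}(A\#_\nu B)$, then Lemma~\ref{8} with exponent $p/2$, and finally quotes the bound established in \eqref{13}; for \eqref{19} it likewise inserts Lemma~\ref{11} exactly as you do. The only cosmetic difference is that you package Lemmas~\ref{6} and~\ref{8} into a standalone $p$-power inequality first, whereas the paper applies them inline.
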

\begin{proof}
It can be easily seen that the operator inequality \eqref{18} is equivalent to
\[\left\| {{\Phi }^{\frac{p}{2}}}\left( A{{\nabla }_{\nu }}B \right){{\Phi }^{-\frac{p}{2}}}\left( A{{\#}_{\nu }}B \right) \right\|\le \frac{{{\left( M+m \right)}^{p}}}{4{{M}^{\frac{p}{2}}}{{m}^{\frac{p}{2}}}{{K}^{\frac{pr}{2}}}\left( h' \right)}.\]
By simple computation
\[\begin{aligned}
& \left\| {{\Phi }^{\frac{p}{2}}}\left( A{{\nabla }_{\nu }}B \right){{M}^{\frac{p}{2}}}{{m}^{\frac{p}{2}}}{{K}^{\frac{pr}{2}}}\left( h' \right){{\Phi }^{-\frac{p}{2}}}\left( A{{\#}_{\nu }}B \right) \right\| \\ 
& \le \frac{1}{4}{{\left\| {{\Phi }^{\frac{p}{2}}}\left( A{{\nabla }_{\nu }}B \right)+{{M}^{\frac{p}{2}}}{{m}^{\frac{p}{2}}}{{K}^{\frac{pr}{2}}}\left( h' \right){{\Phi }^{-\frac{p}{2}}}\left( A{{\#}_{\nu }}B \right) \right\|}^{2}} \quad \text{(by Lemma \ref{6})}\\ 
& \le \frac{1}{4}{{\left\| {{\left( \Phi \left( A{{\nabla }_{\nu }}B \right)+Mm{{K}^{r}}\left( h' \right){{\Phi }^{-1}}\left( A{{\#}_{\nu }}B \right) \right)}^{\frac{p}{2}}} \right\|}^{2}} \quad \text{(by Lemma \ref{8})}\\ 
& = \frac{1}{4}{{\left\| \Phi \left( A{{\nabla }_{\nu }}B \right)+Mm{{K}^{r}}\left( h' \right){{\Phi }^{-1}}\left( A{{\#}_{\nu }}B \right) \right\|}^{p}} \\ 
& \le \frac{1}{4}{{\left( M+m \right)}^{p}} \quad \text{(by \eqref{13})},\\ 
\end{aligned}\]
which leads to \eqref{18}.
\vskip 0.2 true cm
The desired inequality \eqref{19} is equivalent to 
\[\left\| {{\Phi }^{\frac{p}{2}}}\left( A{{\nabla }_{\nu }}B \right){{\left( \Phi \left( A \right){{\#}_{\nu }}\Phi \left( B \right) \right)}^{-\frac{p}{2}}} \right\|\le \frac{{{\left( M+m \right)}^{p}}}{4{{M}^{\frac{p}{2}}}{{m}^{\frac{p}{2}}}{{K}^{\frac{pr}{2}}}\left( h' \right)}.\]
The result will follow from
\[\begin{aligned}
& \left\| {{\Phi }^{\frac{p}{2}}}\left( A{{\nabla }_{\nu }}B \right){{M}^{\frac{p}{2}}}{{m}^{\frac{p}{2}}}{{K}^{\frac{pr}{2}}}\left( h' \right){{\left( \Phi \left( A \right){{\#}_{\nu }}\Phi \left( B \right) \right)}^{-\frac{p}{2}}} \right\| \\ 
& \le \frac{1}{4}{{\left\| {{\Phi }^{\frac{p}{2}}}\left( A{{\nabla }_{\nu }}B \right)+{{M}^{\frac{p}{2}}}{{m}^{\frac{p}{2}}}{{K}^{\frac{pr}{2}}}\left( h' \right){{\left( \Phi \left( A \right){{\#}_{\nu }}\Phi \left( B \right) \right)}^{-\frac{p}{2}}} \right\|}^{2}} \quad \text{(by Lemma \ref{6})}\\ 
& \le \frac{1}{4}{{\left\| {{\left( \Phi \left( A{{\nabla }_{\nu }}B \right)+Mm{{K}^{r}}\left( h' \right){{\left( \Phi \left( A \right){{\#}_{\nu }}\Phi \left( B \right) \right)}^{-1}} \right)}^{\frac{p}{2}}} \right\|}^{2}} \quad \text{(by Lemma \ref{8})}\\ 
& =\frac{1}{4}{{\left\| \Phi \left( A{{\nabla }_{\nu }}B \right)+Mm{{K}^{r}}\left( h' \right){{\left( \Phi \left( A \right){{\#}_{\nu }}\Phi \left( B \right) \right)}^{-1}} \right\|}^{p}} \\ 
& \le \frac{1}{4}{{\left\| \Phi \left( A{{\nabla }_{\nu }}B \right)+Mm{{K}^{r}}\left( h' \right){{\Phi }^{-1}}\left( A{{\#}_{\nu }}B \right) \right\|}^{p}} \quad \text{(by Lemma \ref{11})}\\ 
& \le \frac{1}{4}{{\left( M+m \right)}^{p}} \quad \text{(by \eqref{13})},\\ 
\end{aligned}\]
as required. 
\end{proof}
\begin{remark}
Notice that the Kantorovich's constant $K\left( h \right)=\frac{{{\left( h+1 \right)}^{2}}}{4h},\text{ }h>0$ is an increasing function on $\left[ 1,\infty \right)$. Moreover $K\left( h \right)\ge 1$ for any $h>0$. Therefore, Theorem \ref{20} is a refinement of the inequalities, \eqref{16} and \eqref{17} for $2\le p<\infty $.
\end{remark}
\vskip 0.4 true cm
It is proved in {{\cite[Theorem 2.6]{11}}} that for $4\le p<\infty $,
\[{{\Phi }^{p}}\left( \frac{A+B}{2} \right)\le \frac{{{\left( K\left( h \right)\left( {{M}^{2}}+{{m}^{2}} \right) \right)}^{p}}}{16{{M}^{p}}{{m}^{p}}}{{\Phi }^{p}}\left( A\#B \right),\]
and
\[{{\Phi }^{p}}\left( \frac{A+B}{2} \right)\le \frac{{{\left( K\left( h \right)\left( {{M}^{2}}+{{m}^{2}} \right) \right)}^{p}}}{16{{M}^{p}}{{m}^{p}}}{{\left( \Phi \left( A \right)\#\Phi \left( B \right) \right)}^{p}}.\]
These inequalities can be improved:

\begin{theorem}
If $0<m\le B\le m'<M'\le A\le M$ or $0<m\le A\le m'<M'\le B\le M$, then for each $4\le p<\infty $ and $\nu \in \left[ 0,1 \right]$ we have
\begin{equation}\label{28}
{{\Phi }^{p}}\left( A{{\nabla }_{\nu }}B \right)\le {{\left( \frac{\sqrt{K\left( {{h}^{2}} \right)}K\left( h \right)}{{{2}^{\frac{4}{p}-1}}{{K}^{r}}\left( h' \right)} \right)}^{p}}{{\Phi }^{p}}\left( A{{\#}_{\nu }}B \right),
\end{equation}
and
\begin{equation}\label{29}
{{\Phi }^{p}}\left( A{{\nabla }_{\nu }}B \right)\le {{\left( \frac{\sqrt{K\left( {{h}^{2}} \right)}K\left( h \right)}{{{2}^{\frac{4}{p}-1}}{{K}^{r}}\left( h' \right)} \right)}^{p}}{{\left( \Phi \left( A \right){{\#}_{\nu }}\Phi \left( B \right) \right)}^{p}},
\end{equation}
where $r=\min \left\{ \nu ,1-\nu \right\}$, $K\left( h \right)=\frac{{{\left( h+1 \right)}^{2}}}{4h},K\left( h' \right)=\frac{{{\left( h'+1 \right)}^{2}}}{4h'},h=\frac{M}{m},h'=\frac{M'}{m'}$. 
\end{theorem}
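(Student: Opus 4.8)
The plan is to imitate the proof of Theorem \ref{20} almost line for line, the only structural change being that Lemma \ref{8} is invoked with the exponent $p/4$ in place of $p/2$; this is exactly what forces the hypothesis $p\ge 4$ (we need $p/4\ge 1$ for Lemma \ref{8}), just as $p/2\ge 1$ forced $p\ge 2$ earlier. Concretely, I would first use Lemma \ref{50} to recast \eqref{28} as the equivalent norm inequality
\[\left\|\Phi^{\frac p2}\!\left(A\nabla_\nu B\right)\,\Phi^{-\frac p2}\!\left(A\#_\nu B\right)\right\|\le\left(\frac{\sqrt{K\left(h^{2}\right)}\,K\left(h\right)}{2^{\frac4p-1}K^{r}\left(h'\right)}\right)^{\frac p2}.\]
Inserting a suitable positive scalar, applying Lemma \ref{6} to produce the factor $\tfrac14$ together with a sum, and then collapsing that sum with Lemma \ref{8} at the exponent $p/4$, the problem is reduced to controlling a quantity of the shape $\big\|\Phi^{2}\!\left(A\nabla_\nu B\right)+c\,\Phi^{-2}\!\left(A\#_\nu B\right)\big\|$ for an explicit constant $c$.

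The new analytic input is a \emph{squared} counterpart of the basic estimate \eqref{13}, and this is where the factor $\sqrt{K\left(h^{2}\right)}$ enters: since $\sqrt{K\left(h^{2}\right)}=\tfrac{M^{2}+m^{2}}{2Mm}$, the bound $M^{2}+m^{2}$ will play the role that $M+m$ played in Theorem \ref{5}. Because $0<m\le A,B\le M$ gives $m^{2}\le A^{2},B^{2}\le M^{2}$, the scalar Kantorovich inequality applied to the squares yields $A^{2}+M^{2}m^{2}A^{-2}\le M^{2}+m^{2}$ and $B^{2}+M^{2}m^{2}B^{-2}\le M^{2}+m^{2}$. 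Weighting these by $1-\nu$ and $\nu$, summing, and applying $\Phi$ produces the exact analogue of \eqref{4},
\[\Phi\!\left(A^{2}\nabla_\nu B^{2}\right)+M^{2}m^{2}\,\Phi\!\left(A^{-2}\nabla_\nu B^{-2}\right)\le\left(M^{2}+m^{2}\right)I.\]
From here I would convert the left-hand side into a bound on $\big\|\Phi^{2}\!\left(A\nabla_\nu B\right)+c\,\Phi^{-2}\!\left(A\#_\nu B\right)\big\|$, passing from $\Phi^{-1}$ to $\Phi\!\left(\,\cdot^{-1}\right)$ by Lemma \ref{7} and inserting the $K^{r}\!\left(h'\right)$ factor on the geometric mean by a squared use of Lemma \ref{1}, exactly as \eqref{13} was assembled in the proof of Theorem \ref{5}. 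Feeding this squared base back into the chain above yields \eqref{28}. The second inequality \eqref{29} then follows by running the same chain and invoking Ando's Lemma \ref{11} at the one point where $\Phi^{-1}\!\left(A\#_\nu B\right)$ must be compared with $\left(\Phi\left(A\right)\#_\nu\Phi\left(B\right)\right)^{-1}$; this is precisely how \eqref{12} was deduced from \eqref{10} and \eqref{19} from \eqref{18}, so no new idea is required there.

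The step I expect to be the main obstacle is the passage from the operator inequality in the squares of $A,B$ to a genuine bound on $\big\|\Phi^{2}\!\left(A\nabla_\nu B\right)+c\,\Phi^{-2}\!\left(A\#_\nu B\right)\big\|$. Since $X\mapsto X^{2}$ is not operator monotone, one cannot simply square \eqref{13}: the arithmetic term $\Phi\!\left(A^{2}\nabla_\nu B^{2}\right)$ must be matched against $\Phi^{2}\!\left(A\nabla_\nu B\right)$ (for which the operator convexity of $t^{2}$ together with the Choi--Davis inequality $\Phi\left(X\right)^{2}\le\Phi\!\left(X^{2}\right)$ is lossless), while the geometric term $\Phi\!\left(A^{-2}\nabla_\nu B^{-2}\right)$ must be matched against $\Phi^{-2}\!\left(A\#_\nu B\right)$, and squaring the geometric--arithmetic comparison introduces a Kantorovich-type factor. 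It is exactly the careful bookkeeping of this factor that makes the final constant equal to $\sqrt{K\left(h^{2}\right)}K\left(h\right)/\!\left(2^{4/p-1}K^{r}\left(h'\right)\right)$ and thereby recovers, and refines by the factor $K^{-r}\!\left(h'\right)$, the constant of \cite{11}; verifying that this accounting lands on the stated constant is the delicate part of the argument.
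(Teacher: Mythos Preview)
Your outer framework is exactly right and matches the paper: rewrite \eqref{28} as a norm inequality via Lemma~\ref{50}, apply Lemma~\ref{6}, then Lemma~\ref{8} with exponent $p/4$ (hence the hypothesis $p\ge 4$), and reduce everything to bounding
\[
\bigl\|\,\alpha\,\Phi^{2}(A\nabla_\nu B)+\beta\,\Phi^{-2}(A\#_\nu B)\,\bigr\|
\]
for suitable scalars $\alpha,\beta$. The deduction of \eqref{29} from \eqref{28} via Lemma~\ref{11} is also identical to the paper's.

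Where you diverge from the paper is in how the ``squared base estimate'' is obtained, and this is where your plan has a genuine gap. You propose to build an analogue of \eqref{4} from $A^{2}+M^{2}m^{2}A^{-2}\le M^{2}+m^{2}$ and $B^{2}+M^{2}m^{2}B^{-2}\le M^{2}+m^{2}$, arriving at
\[
\Phi(A^{2}\nabla_\nu B^{2})+M^{2}m^{2}\,\Phi(A^{-2}\nabla_\nu B^{-2})\le M^{2}+m^{2},
\]
and then to trade $\Phi(A^{2}\nabla_\nu B^{2})$ for $\Phi^{2}(A\nabla_\nu B)$ and $\Phi(A^{-2}\nabla_\nu B^{-2})$ for $\Phi^{-2}(A\#_\nu B)$. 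The first trade is fine (Kadison plus operator convexity of $t^{2}$). The second trade is the problem you yourself flag: $t\mapsto t^{2}$ is not operator monotone and $t\mapsto t^{-2}$ is not operator convex, so one cannot pass from $\Phi^{-1}(A\#_\nu B)\le\Phi(A^{-1}\#_\nu B^{-1})$ to $\Phi^{-2}(A\#_\nu B)\le\Phi(A^{-2}\#_\nu B^{-2})$ without inserting a reverse-Kantorovich factor; once that factor is inserted and Lemma~\ref{1} is applied to $A^{2},B^{2}$ (producing $K^{r}((h')^{2})$ rather than $K^{r}(h')$), the bookkeeping no longer lands on the stated constant. You acknowledge this is ``the delicate part'' but do not resolve it, and as written it does not close.

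The paper sidesteps this obstacle entirely with a much cleaner idea: it never separates $A$ and $B$. Instead it observes that $m\le\Phi(A\nabla_\nu B)\le M$ and applies the elementary scalar inequality $T^{2}+M^{2}m^{2}T^{-2}\le M^{2}+m^{2}$ directly with $T=\Phi(A\nabla_\nu B)$, obtaining
\[
\Phi^{2}(A\nabla_\nu B)+M^{2}m^{2}\,\Phi^{-2}(A\nabla_\nu B)\le M^{2}+m^{2}.
\]
The geometric mean never appears in this base estimate. The $\Phi^{-2}(A\#_\nu B)$ term is then handled not by Lemmas~\ref{7} and \ref{1} again, but by \emph{inverting the already-proved inequality \eqref{10}} of Theorem~\ref{5}:
\[
\Phi^{-2}(A\#_\nu B)\le\Bigl(\tfrac{K(h)}{K^{r}(h')}\Bigr)^{2}\Phi^{-2}(A\nabla_\nu B).
\]
Splitting the product in Lemma~\ref{6} asymmetrically with the weight $K(h)/K^{r}(h')$ so that this substitution produces exactly $\tfrac{K(h)}{K^{r}(h')}\bigl(\Phi^{2}(A\nabla_\nu B)+M^{2}m^{2}\Phi^{-2}(A\nabla_\nu B)\bigr)$, the base estimate above closes the chain and yields the constant $\sqrt{K(h^{2})}\,K(h)\big/\bigl(2^{4/p-1}K^{r}(h')\bigr)$ on the nose. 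Thus the key device you are missing is to recycle Theorem~\ref{5} rather than rebuild a squared version of \eqref{13} from scratch.
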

\begin{proof}
It is easily verified that if $0<m\le T\le M$, then
\begin{equation}\label{63}
{{M}^{2}}{{m}^{2}}{{T}^{-2}}+{{T}^{2}}\le {{M}^{2}}+{{m}^{2}}.
\end{equation}
According to the assumption we have
	\[0<m\le A{{\nabla }_{\nu }}B\le M,\] 
we can also write
	\[0<m\le \Phi \left( A{{\nabla }_{\nu }}B \right)\le M.\] 
Using the substitution $T=\Phi \left( A{{\nabla }_{\nu }}B \right)$ in \eqref{63} we get
\begin{equation}\label{64}
{{M}^{2}}{{m}^{2}}{{\Phi }^{-2}}\left( A{{\nabla }_{\nu }}B \right)+{{\Phi }^{2}}\left( A{{\nabla }_{\nu }}B \right)\le {{M}^{2}}+{{m}^{2}}. 
\end{equation}
On the other hand, from \eqref{10} we obtain
\begin{equation}\label{26}
{{\Phi }^{-2}}\left( A{{\#}_{\nu }}B \right)\le {{\left( \frac{K\left( h \right)}{{{K}^{r}}\left( h' \right)} \right)}^{2}}{{\Phi }^{-2}}\left( A{{\nabla }_{\nu }}B \right).
\end{equation}
From this one can see that
\[\begin{aligned}
& \left\| {{\Phi }^{\frac{p}{2}}}\left( A{{\nabla }_{\nu }}B \right){{M}^{\frac{p}{2}}}{{m}^{\frac{p}{2}}}{{\Phi }^{-\frac{p}{2}}}\left( A{{\#}_{\nu }}B \right) \right\| \\ 
& \le \frac{1}{4}{{\left\| \frac{{{K}^{\frac{p}{4}}}\left( h \right)}{{{K}^{\frac{rp}{4}}}\left( h' \right)}{{\Phi }^{\frac{p}{2}}}\left( A{{\nabla }_{\nu }}B \right)+{{\left( \frac{{{K}^{r}}\left( h' \right){{M}^{2}}{{m}^{2}}}{K\left( h \right)} \right)}^{\frac{p}{4}}}{{\Phi }^{-\frac{p}{2}}}\left( A{{\#}_{\nu }}B \right) \right\|}^{2}} \quad \text{(by Lemma \ref{6})}\\ 
& \le \frac{1}{4}{{\left\| {{\left( \frac{K\left( h \right)}{{{K}^{r}}\left( h' \right)}{{\Phi }^{2}}\left( A{{\nabla }_{\nu }}B \right)+\frac{{{K}^{r}}\left( h' \right){{M}^{2}}{{m}^{2}}}{K\left( h \right)}{{\Phi }^{-2}}\left( A{{\#}_{\nu }}B \right) \right)}^{\frac{p}{4}}} \right\|}^{2}} \quad \text{(by Lemma \ref{8})}\\ 
& =\frac{1}{4}{{\left\| \frac{K\left( h \right)}{{{K}^{r}}\left( h' \right)}{{\Phi }^{2}}\left( A{{\nabla }_{\nu }}B \right)+\frac{{{K}^{r}}\left( h' \right){{M}^{2}}{{m}^{2}}}{K\left( h \right)}{{\Phi }^{-2}}\left( A{{\#}_{\nu }}B \right) \right\|}^{\frac{p}{2}}} \\ 
& \le \frac{1}{4}{{\left\| \frac{K\left( h \right)}{{{K}^{r}}\left( h' \right)}{{\Phi }^{2}}\left( A{{\nabla }_{\nu }}B \right)+\frac{K\left( h \right){{M}^{2}}{{m}^{2}}}{{{K}^{r}}\left( h' \right)}{{\Phi }^{-2}}\left( A{{\nabla }_{\nu }}B \right) \right\|}^{\frac{p}{2}}} \quad \text{(by \eqref{26})}\\ 
& =\frac{1}{4}{{\left\| \frac{K\left( h \right)}{{{K}^{r}}\left( h' \right)}\left( {{\Phi }^{2}}\left( A{{\nabla }_{\nu }}B \right)+{{M}^{2}}{{m}^{2}}{{\Phi }^{-2}}\left( A{{\nabla }_{\nu }}B \right) \right) \right\|}^{\frac{p}{2}}} \\ 
& \le \frac{1}{4}{{\left( \frac{K\left( h \right)\left( {{M}^{2}}+{{m}^{2}} \right)}{{{K}^{r}}\left( h' \right)} \right)}^{\frac{p}{2}}} \quad \text{(by \eqref{64})}.\\ 
\end{aligned}\]
Show that
\[\left\| {{\Phi }^{\frac{p}{2}}}\left( A{{\nabla }_{\nu }}B \right){{\Phi }^{-\frac{p}{2}}}\left( A{{\#}_{\nu }}B \right) \right\|\le \frac{1}{4}{{\left( \frac{K\left( h \right)\left( {{M}^{2}}+{{m}^{2}} \right)}{{{K}^{r}}\left( h' \right)Mm} \right)}^{\frac{p}{2}}}.\]
The validity of this inequality is just  inequality \eqref{28}.

Similarly, \eqref{29} holds by the inequality \eqref{12}.
\end{proof}
\begin{remark}
For $\nu =\frac{1}{2}$ such result can be found in {{\cite[Theorem 3]{3}}}.
\end{remark}

\vskip 0.3 true cm
In {{\cite[Theorem 2.1]{43}}} the authors gave operator P\'olya-Szeg\"o inequality as follows:

\begin{theorem}\label{49}
Let $\Phi $ be a positive linear map. If $0<m_{1}^{2}\le A\le M_{1}^{2}$ and $0<m_{2}^{2}\le B\le M_{2}^{2}$ for some positive real numbers ${{m}_{1}}\le {{M}_{1}}$ and ${{m}_{2}}\le {{M}_{2}}$, then 
\begin{equation}\label{48}
\Phi \left( A \right)\#\Phi \left( B \right)\le \frac{M+m}{2\sqrt{Mm}}\Phi \left( A\#B \right),
\end{equation}
where $m=\frac{{{m}_{2}}}{{{M}_{1}}}$ and $M=\frac{{{M}_{2}}}{{{m}_{1}}}$. 
\end{theorem}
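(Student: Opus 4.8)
The plan is to avoid the geometric mean that sits on the left-hand side by first \emph{linearizing} it through the weighted arithmetic--geometric mean inequality, and then to push everything to a scalar estimate that is preserved under the positive linear map $\Phi$. In particular the only role of $\Phi$ will be its positivity and linearity, so unitality is not needed.

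First I would record what the spectral hypotheses say about the relative operator $A^{-1/2}BA^{-1/2}$. From $m_1^2\le A\le M_1^2$ one gets $M_1^{-2}I\le A^{-1}\le m_1^{-2}I$, and combining this with $m_2^2\le B\le M_2^2$ yields
\[
m^2 I=\frac{m_2^2}{M_1^2}\,I\le A^{-1/2}BA^{-1/2}\le \frac{M_2^2}{m_1^2}\,I=M^2 I ,
\]
so that $Y:=\left(A^{-1/2}BA^{-1/2}\right)^{1/2}$ satisfies $mI\le Y\le MI$. This is the only point at which the four scalar constants enter; from here only $m=m_2/M_1$ and $M=M_2/m_1$ survive.

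Next I would establish the $\Phi$-free operator inequality
\[
\frac{\sqrt{Mm}\,A+(Mm)^{-1/2}B}{2}\le \frac{M+m}{2\sqrt{Mm}}\,(A\#B),
\]
which is the heart of the matter. Conjugating by $A^{-1/2}$ and using $A^{-1/2}(A\#B)A^{-1/2}=Y$ together with $A^{-1/2}BA^{-1/2}=Y^2$, this reduces to $\tfrac12\bigl(\sqrt{Mm}\,I+(Mm)^{-1/2}Y^2\bigr)\le \tfrac{M+m}{2\sqrt{Mm}}\,Y$, that is, to the scalar inequality $y^2-(M+m)y+Mm\le 0$ on the spectrum of $Y$. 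Since that polynomial factors as $(y-m)(y-M)$ and $m\le y\le M$, the inequality follows from the continuous functional calculus. The weight $s=\sqrt{Mm}$ is precisely the one for which the two roots of the quadratic coincide with the spectral endpoints $m$ and $M$, and it is exactly what produces the Kantorovich-type constant $\tfrac{M+m}{2\sqrt{Mm}}$.

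Finally I would apply $\Phi$ and close with the arithmetic--geometric mean inequality for operators. Applying the positive linear map $\Phi$ to the displayed operator inequality gives $\tfrac12\bigl(\sqrt{Mm}\,\Phi(A)+(Mm)^{-1/2}\Phi(B)\bigr)\le \tfrac{M+m}{2\sqrt{Mm}}\,\Phi(A\#B)$, while the elementary bound $P\#Q\le\tfrac12\bigl(sP+s^{-1}Q\bigr)$, valid for every $s>0$ since after conjugation it amounts to $0\le(s-\sqrt z)^2$, applied with $P=\Phi(A)$, $Q=\Phi(B)$ and $s=\sqrt{Mm}$ gives $\Phi(A)\#\Phi(B)\le \tfrac12\bigl(\sqrt{Mm}\,\Phi(A)+(Mm)^{-1/2}\Phi(B)\bigr)$. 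Chaining these two estimates proves \eqref{48}. The main obstacle is the middle step: one has to guess the correct weight $s=\sqrt{Mm}$ and check that it simultaneously linearizes $A\#B$ and delivers the sharp constant; once the factorization $(y-m)(y-M)\le 0$ is identified, the rest is automatic.
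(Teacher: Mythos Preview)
Your proof is correct. Note that the paper does not supply its own proof of this theorem; it is quoted as \cite[Theorem~2.1]{43}. However, the argument you give is exactly the skeleton the paper uses in proving the \emph{refinement} (Theorem~\ref{46}): the spectral sandwich $m^{2}I\le A^{-1/2}BA^{-1/2}\le M^{2}I$, the factorization $(Y-mI)(MI-Y)\ge 0$ leading to $MmA+B\le (M+m)\,A\#B$ (the paper's \eqref{42}), and then application of $\Phi$ followed by an AM--GM bound on $\Phi(A)\#\Phi(B)$. The only difference is that you close with the elementary weighted AM--GM $P\#Q\le \tfrac12(sP+s^{-1}Q)$, whereas the paper, aiming at the sharper Theorem~\ref{46}, invokes the Kantorovich-refined version (Lemma~\ref{43}) at that last step. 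So your route coincides with the paper's own argument specialized to the unrefined case.
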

It is worth noting that the inequality \eqref{48} was first proved in {{\cite[Theorem 4]{60}}} for matrices under the sandwich assumption $mA\le B\le MA$ (see also {{\cite[Theorem 3]{61}}}).
\vskip 0.3 true cm
Zhao et al. {{\cite[Theorem 3.2]{42}}} by using the same strategies of \cite{2} obtained that: 
\begin{lemma}\label{43}
If $A,B\ge 0,\text{ }1<h\le {{A}^{-\frac{1}{2}}}B{{A}^{-\frac{1}{2}}}\le h'$ or $0<h'\le {{A}^{-\frac{1}{2}}}B{{A}^{-\frac{1}{2}}}\le h<1$, then 
\begin{equation*}
K{{\left( h \right)}^{r}}A{{\#}_{\nu }}B\le A{{\nabla }_{\nu }}B,
\end{equation*}
for all $\nu \in \left[ 0,1 \right]$, where $r=\min \left\{ \nu ,1-\nu  \right\}$.  
\end{lemma}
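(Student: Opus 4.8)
The plan is to mirror the argument used for Lemma~\ref{1}, applying it directly to $A$ and $B$ rather than to their inverses. The starting point is the scalar refinement of Young's inequality recorded in \eqref{23}: for all $a,b>0$ and $\nu\in[0,1]$ one has $K^r(b/a)\,a^{1-\nu}b^\nu\le a\nabla_\nu b$ with $r=\min\{\nu,1-\nu\}$. Setting $a=1$ and $b=x$ gives
\[
K^r(x)\,x^\nu\le (1-\nu)+\nu x\qquad (x>0).
\]

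First I would reduce the claim to a spectral estimate on $X:=A^{-1/2}BA^{-1/2}$. In the first case the hypothesis reads $hI\le X\le h'I$ with $h>1$, so the spectrum of $X$ lies in $[h,h']\subset(1,\infty)$. Since $K$ is increasing on $[1,\infty)$ and $t\mapsto t^r$ is increasing for $t\ge 1$ and $r\ge 0$, every $x$ in this interval satisfies $K^r(h)\le K^r(x)$; combining this with the displayed scalar inequality yields $K^r(h)\,x^\nu\le(1-\nu)+\nu x$ for all $x\in[h,h']$. Applying the continuous functional calculus to $X$ then gives the operator inequality
\[
K^r(h)\,X^\nu\le (1-\nu)I+\nu X.
\]

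Next I would perform a congruence transformation by $A^{1/2}$: multiplying both sides on the left and right by $A^{1/2}$ converts $X^\nu$ into $A^{1/2}\bigl(A^{-1/2}BA^{-1/2}\bigr)^\nu A^{1/2}=A\#_\nu B$ and $X$ into $A^{1/2}A^{-1/2}BA^{-1/2}A^{1/2}=B$, while the identity becomes $A$. This produces $K^r(h)(A\#_\nu B)\le (1-\nu)A+\nu B=A\nabla_\nu B$, which is the assertion. For the second case $0<h'\le X\le h<1$, I would invoke the symmetry $K(x)=K(1/x)$ together with the fact that $K$ is decreasing on $(0,1)$: on the interval $[h',h]$ with $h<1$ one again has $K^r(h)\le K^r(x)$, so the identical functional-calculus and congruence steps carry over verbatim.

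The two ingredients that do the real work—the scalar Young-type bound \eqref{23} and the monotonicity of the Kantorovich constant—are already available, so the only point requiring care is the bookkeeping of the direction of monotonicity of $K$ on each side of $1$, which is what guarantees $K^r(h)\le K^r(x)$ throughout the spectrum of $X$. Once that is pinned down, the congruence identities $A^{1/2}X^\nu A^{1/2}=A\#_\nu B$ and $A^{1/2}XA^{1/2}=B$ close the argument, and I anticipate no genuine obstacle beyond this elementary case distinction.
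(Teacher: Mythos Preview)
Your argument is correct. Note, however, that the paper does not actually supply its own proof of this lemma: it is quoted as \cite[Theorem 3.2]{42}, with the remark that Zhao et al.\ obtained it ``by using the same strategies of \cite{2}.'' Your proof is exactly the natural adaptation of the paper's proof of Lemma~\ref{1} (which \emph{is} written out in full) to $A,B$ in place of $A^{-1},B^{-1}$, so it both matches the strategy the paper attributes to the cited source and parallels the argument the paper gives for the companion lemma. The only cosmetic point is that the symmetry $K(x)=K(1/x)$ you mention in the second case is not actually needed---the decreasing property of $K$ on $(0,1)$ alone already gives $K^r(h)\le K^r(x)$ for $x\in[h',h]$.
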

\vskip 0.3 true cm
Now, we try to obtain a new refinement of Theorem \ref{49} by using Lemma \ref{43}.
\begin{theorem}\label{46}
Let $A,B$ be two positive operators such that $m_{1}^{2}\le A\le M_{1}^{2}$, $m_{2}^{2}\le B\le M_{2}^{2}$, $m=\frac{{{m}_{2}}}{{{M}_{1}}}$ and $M=\frac{{{M}_{2}}}{{{m}_{1}}}$. If ${{M}_{1}}<{{m}_{2}}$, then
\begin{equation}\label{47}
\Phi \left( A \right)\#\Phi \left( B \right)\le \gamma \Phi \left( A\#B \right),
\end{equation}
where
\[\gamma =\frac{M+m}{2\sqrt{MmK\left( h \right)}},\]
and $h=\frac{m_{2}^{2}}{M_{1}^{2}}$.

Moreover the inequality \eqref{47} holds true for ${{M}_{2}}<{{m}_{1}}$ and $h=\frac{M_{2}^{2}}{m_{1}^{2}}$.
\end{theorem}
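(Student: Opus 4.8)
The plan is to mirror the proof of the operator Pólya--Szeg\"o inequality \eqref{48} in Theorem \ref{49} (from \cite{43}) and to inject the new Kantorovich factor supplied by Lemma \ref{43}. First I would record that the hypotheses force a sandwich condition: from $m_{1}^{2}\le A\le M_{1}^{2}$ and $m_{2}^{2}\le B\le M_{2}^{2}$ one gets $M_{1}^{-2}I\le A^{-1}\le m_{1}^{-2}I$, whence
\[\frac{m_{2}^{2}}{M_{1}^{2}}I=m^{2}I\le A^{-\frac12}BA^{-\frac12}\le M^{2}I=\frac{M_{2}^{2}}{m_{1}^{2}}I,\]
and, applying the positive linear map, $m^{2}\Phi(A)\le\Phi(B)\le M^{2}\Phi(A)$. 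The extra assumption ${M}_{1}<{m}_{2}$ makes $h=\frac{m_{2}^{2}}{M_{1}^{2}}=m^{2}>1$, so that $A^{-\frac12}BA^{-\frac12}$ is bounded below by $h>1$; this is precisely the first regime of Lemma \ref{43}. Note also that the target constant factors as $\gamma=\frac{1}{\sqrt{K(h)}}\cdot\frac{M+m}{2\sqrt{Mm}}$, i.e. it is the Pólya--Szeg\"o constant of Theorem \ref{49} divided by $\sqrt{K(h)}$, so the whole point is to harvest one extra factor $\sqrt{K(h)}$ beyond the classical estimate.

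Next, exactly as in Theorems \ref{5} and \ref{20}, I would convert the operator inequality \eqref{47} into a norm inequality via Lemma \ref{50}, rewriting $\Phi(A)\#\Phi(B)\le\gamma\,\Phi(A\#B)$ as the bound $\|(\Phi(A)\#\Phi(B))^{\frac12}\Phi(A\#B)^{-\frac12}\|\le\gamma^{\frac12}$, and then estimate the relevant norm by the arithmetic--geometric mean norm inequality (Lemma \ref{6}) after inserting an optimally chosen scalar. Ando's inequality (Lemma \ref{11}), $\Phi(A\#B)\le\Phi(A)\#\Phi(B)$, is used to trade the genuinely map-dependent term for a tractable one. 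The single new ingredient is Lemma \ref{43} taken at $\nu=\tfrac12$, which in our regime reads $\sqrt{K(h)}\,(A\#B)\le A\nabla B=\frac{A+B}{2}$; applying $\Phi$ yields $\sqrt{K(h)}\,\Phi(A\#B)\le\Phi\!\left(\frac{A+B}{2}\right)$, and it is this refined mean inequality that replaces the plain geometric--arithmetic bound in the classical argument and ultimately produces the denominator factor $\sqrt{K(h)}$ in $\gamma$.

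The hard part will be to splice Lemma \ref{43} into the chain at the exact place where the argument for Theorem \ref{49} invokes the unrefined geometric mean, so that $\sqrt{K(h)}$ surfaces in the denominator of $\gamma$ without disturbing the $\frac{M+m}{2\sqrt{Mm}}$ that comes from the scalar estimate $s+\frac{Mm}{s}\le M+m$ on $[m,M]$; keeping these two effects cleanly separated is the delicate bookkeeping. Finally I would dispatch the second regime ${M}_{2}<{m}_{1}$: there $B\le M_{2}^{2}<m_{1}^{2}\le A$ forces $A^{-\frac12}BA^{-\frac12}\le M^{2}<1$ with $h=\frac{M_{2}^{2}}{m_{1}^{2}}=M^{2}<1$, which is the second case of Lemma \ref{43}; since $K(x)=K(1/x)$ and $K$ is decreasing on $(0,1)$, the very same computation goes through verbatim with this value of $h$, and \eqref{47} follows identically.
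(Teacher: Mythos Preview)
Your plan misfires at the single place that matters: the direction in which you deploy Lemma~\ref{43}. You apply it to the pair $A,B$ and then push $\Phi$ through, obtaining
\[
\sqrt{K(h)}\,\Phi(A\#B)\le \Phi\!\left(\tfrac{A+B}{2}\right).
\]
But in the target inequality \eqref{47} the term $\Phi(A\#B)$ sits on the \emph{larger} side, so an upper bound on it is useless; you need a \emph{lower} bound on $\Phi(A\#B)$ and an \emph{upper} bound on $\Phi(A)\#\Phi(B)$. For the same reason, invoking Ando's inequality (Lemma~\ref{11}) here is counterproductive: $\Phi(A\#B)\le\Phi(A)\#\Phi(B)$ points the wrong way for \eqref{47}. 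Consequently the norm machinery (Lemmas~\ref{50} and~\ref{6}) you borrow from Theorems~\ref{5} and~\ref{20} has nothing to bite on, and the chain cannot close up to produce the factor $\gamma$.

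The paper's argument is direct and avoids norms entirely. From the sandwich $m\le (A^{-1/2}BA^{-1/2})^{1/2}\le M$ the elementary quadratic $(s-m)(M-s)\ge 0$ yields $(M+m)A\#B\ge MmA+B$; applying $\Phi$ gives the lower bound $(M+m)\Phi(A\#B)\ge Mm\Phi(A)+\Phi(B)$. The Kantorovich improvement enters by applying Lemma~\ref{43} (at $\nu=\tfrac12$) to the pair $Mm\,\Phi(A)$ and $\Phi(B)$ (equivalently, to $\Phi(A),\Phi(B)$ after noting $1<h=\tfrac{m_2^2}{M_1^2}\le \Phi(A)^{-1/2}\Phi(B)\Phi(A)^{-1/2}$), which gives the \emph{upper} bound
\[
2\sqrt{Mm\,K(h)}\,\bigl(\Phi(A)\#\Phi(B)\bigr)\le Mm\,\Phi(A)+\Phi(B).
\]
Chaining these two operator inequalities immediately yields \eqref{47}. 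The fix to your outline is therefore: drop Lemmas~\ref{50}, \ref{6}, \ref{11}; keep your scalar estimate $s+\tfrac{Mm}{s}\le M+m$ but apply it at the level of $A^{-1/2}BA^{-1/2}$ (then push $\Phi$); and redirect Lemma~\ref{43} to $\Phi(A),\Phi(B)$ so it bounds $\Phi(A)\#\Phi(B)$ from above. The second regime $M_2<m_1$ then follows exactly as you describe.
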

\begin{proof}
According to the assumptions we have
\[{{m}^{2}}=\frac{m_{2}^{2}}{M_{1}^{2}}\le {{A}^{-\frac{1}{2}}}B{{A}^{-\frac{1}{2}}}\le \frac{M_{2}^{2}}{m_{1}^{2}}={{M}^{2}},\]
i.e.,
\begin{equation}\label{41}
m\le {{\left( {{A}^{-\frac{1}{2}}}B{{A}^{-\frac{1}{2}}} \right)}^{\frac{1}{2}}}\le M.
\end{equation}
Therefore \eqref{41} implies that
\[\left( {{\left( {{A}^{-\frac{1}{2}}}B{{A}^{-\frac{1}{2}}} \right)}^{\frac{1}{2}}}-m \right)\left( M-{{\left( {{A}^{-\frac{1}{2}}}B{{A}^{-\frac{1}{2}}} \right)}^{\frac{1}{2}}} \right)\ge 0.\]
Simplifying we find that
\[\left( M+m \right){{\left( {{A}^{-\frac{1}{2}}}B{{A}^{-\frac{1}{2}}} \right)}^{\frac{1}{2}}}\ge Mm+{{A}^{-\frac{1}{2}}}B{{A}^{-\frac{1}{2}}}.\]
Multiplying both sides by ${{A}^{\frac{1}{2}}}$ to get
\begin{equation}\label{42}
\left( M+m \right)A\#B\ge MmA+B.
\end{equation}
By applying positive linear map $\Phi $ in \eqref{42} we infer
\begin{equation}\label{45}
\left( M+m \right)\Phi \left( A\#B \right)\ge Mm\Phi \left( A \right)+\Phi \left( B \right).
\end{equation}
Utilizing Lemma \ref{43} for the case $\nu =\frac{1}{2},\text{ }A=Mm\Phi \left( A \right)$ and $B=\Phi \left( B \right)$, and by taking into account that $m_{1}^{2}\frac{{{M}_{2}}{{m}_{2}}}{{{M}_{1}}{{m}_{1}}}\le \frac{{{M}_{2}}{{m}_{2}}}{{{M}_{1}}{{m}_{1}}}\Phi \left( A \right)\le M_{1}^{2}\frac{{{M}_{2}}{{m}_{2}}}{{{M}_{1}}{{m}_{1}}}$ implies  $m_{1}^{2}\le \Phi \left( A \right)\le M_{1}^{2}$, and $1<\frac{m_{2}^{2}}{M_{1}^{2}}\le {{\Phi }^{-\frac{1}{2}}}\left( A \right)\Phi \left( B \right){{\Phi }^{-\frac{1}{2}}}\left( A \right)\le \frac{M_{2}^{2}}{m_{1}^{2}}$ we infer
\begin{equation}\label{44}
2\sqrt{MmK\left( h \right)}\Phi \left( A \right)\#\Phi \left( B \right)\le Mm\Phi \left( A \right)+\Phi \left( B \right),
\end{equation}
where $h=\frac{m_{2}^{2}}{M_{1}^{2}}$.

Combining \eqref{45} and \eqref{44}, we deduce the desired result \eqref{47}. The case ${{M}_{2}}<{{m}_{1}}$ is similar, we omit the details.

This completes the proof.
\end{proof}
\vskip 0.3 true cm
The authors would like to pose the following question that is interesting on its own right.
\begin{question}
Is the constant $\gamma $ in Theorem \ref{46} sharp?
\end{question}
\begin{example}
Taking $m_{1}^{2}=1.21,\text{ }M_{1}^{2}=16,\text{ }m_{2}^{2}=20.25,\text{ }M_{2}^{2}=25,\text{ }A=\left( \begin{matrix}
   2 & -2  \\
   -2 & 7  \\
\end{matrix} \right),\text{ }B=\left( \begin{matrix}
   21 & 0.5  \\
   0.5 & 21  \\
\end{matrix} \right)$
and $\Phi \left( X \right)=\frac{1}{2}tr\left( X \right)\left( X\in {{\mathcal{M}}_{2}} \right)$, by an easy computation we find that
\[\Phi \left( A \right)\#\Phi \left( B \right)\simeq 9.72,\]
\[\frac{M+m}{2\sqrt{Mm}}\Phi \left( A\#B \right)\simeq 11.2,\]
and
\[\frac{M+m}{2\sqrt{MmK\left( h \right)}}\Phi \left( A\#B \right)\simeq 11.12,\]
which shows that if ${{M}_{1}}<{{m}_{2}}$, then inequality \eqref{47} is really an improvement of \eqref{48}. 
\end{example}
\begin{example}
Assume that $m_{1}^{2}=4,\text{ }M_{1}^{2}=9,\text{ }m_{2}^{2}=0.5,\text{ }M_{2}^{2}=2,\text{ }A=\left( \begin{matrix}
   6 & -1  \\
   -1 & 5  \\
\end{matrix} \right),\text{ }B=\left( \begin{matrix}
   1.5 & 0.5  \\
   0.5 & 1.2  \\
\end{matrix} \right)$
and $\Phi \left( X \right)=\frac{1}{2}tr\left( X \right)\left( X\in {{\mathcal{M}}_{2}} \right)$, by an easy computation we find that
\[\Phi \left( A \right)\#\Phi \left( B \right)\simeq 2.72,\]
\[\frac{M+m}{2\sqrt{Mm}}\Phi \left( A\#B \right)\simeq 3.02,\]
and
\[\frac{M+m}{2\sqrt{MmK\left( h \right)}}\Phi \left( A\#B \right)\simeq 2.84,\]
which shows that if ${{M}_{2}}<{{m}_{1}}$, then inequality \eqref{47} is really an improvement of \eqref{48}.
\end{example}
\vskip 0.3 true cm
The inequality \eqref{47} can be squared by a similar method as in {{\cite[Theorem 2.3]{73}}}:
\begin{theorem}\label{97}
Suppose all the assumptions of Theorem \ref{46} be satisfied. Then
\begin{equation}\label{61}
{{\left( \Phi \left( A \right)\#\Phi \left( B \right) \right)}^{2}}\le \psi {{\Phi }^{2}}\left( A\#B \right),
\end{equation}
where 
\begin{equation*}
\psi =\left\{ \begin{array}{lr}
   \frac{{{\gamma }^{2}}{{\left( \alpha +\beta  \right)}^{2}}}{4\alpha \beta }&\text{ if }\alpha \le {{t}_{0}} \\ 
  \frac{\gamma \left( \alpha +\beta  \right)-\beta }{\alpha }&\text{ if }\alpha \ge {{t}_{0}} \\ 
\end{array} \right.,
\end{equation*}
$\alpha ={{m}_{1}}{{m}_{2}}$, and $\beta ={{M}_{1}}{{M}_{2}}$.
\end{theorem}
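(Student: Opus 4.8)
The plan is to sidestep the usual noncommutative obstruction to ``squaring'' by combining the first‑order estimate $\Phi(A)\#\Phi(B)\le\gamma\,\Phi(A\#B)$ of Theorem \ref{46} with the elementary secant bound for the square function, and then to reduce \eqref{61} to the nonnegativity of a scalar quadratic that I optimise over the common spectrum. The point is that every step will be a genuine L\"owner‑order inequality, chained without any commutativity assumption, so the factor‑of‑$4$ overhead of Lemma \ref{6} is avoided entirely.

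Write $X=\Phi(A)\#\Phi(B)$ and $Y=\Phi(A\#B)$. First I would record the spectral localisation $\alpha I\le X,Y\le\beta I$ with $\alpha=m_{1}m_{2}$ and $\beta=M_{1}M_{2}$. Indeed, the hypotheses give $m_{1}^{2}\le A\le M_{1}^{2}$ and $m_{2}^{2}\le B\le M_{2}^{2}$, so monotonicity of the geometric mean yields $m_{1}m_{2}\le A\#B\le M_{1}M_{2}$, and applying the positive unital map $\Phi$ gives $\alpha I\le Y\le\beta I$; the same monotonicity applied to $\Phi(A),\Phi(B)$ (which also satisfy the corresponding bounds) gives $\alpha I\le X\le\beta I$.

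Next, since $\alpha I\le X\le\beta I$, the scalar inequality $(t-\alpha)(t-\beta)\le 0$ on $[\alpha,\beta]$ transported through the continuous functional calculus gives the secant bound $X^{2}\le(\alpha+\beta)X-\alpha\beta I$. Multiplying $X\le\gamma Y$ by the positive scalar $\alpha+\beta$ and substituting, I obtain $X^{2}\le(\alpha+\beta)\gamma\,Y-\alpha\beta I$. Hence it suffices to choose $\psi$ so that $(\alpha+\beta)\gamma\,Y-\alpha\beta I\le\psi Y^{2}$, that is, so that $q(Y)\ge 0$ where $q(t)=\psi t^{2}-(\alpha+\beta)\gamma t+\alpha\beta$. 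Since $\operatorname{spec}(Y)\subseteq[\alpha,\beta]$, by functional calculus this holds as soon as $q(t)\ge 0$ for every $t\in[\alpha,\beta]$, equivalently $\psi\ge p(t):=\dfrac{(\alpha+\beta)\gamma t-\alpha\beta}{t^{2}}$ on $[\alpha,\beta]$.

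The remaining work is the scalar optimisation $\psi=\max_{t\in[\alpha,\beta]}p(t)$. Differentiating, $p'(t)=t^{-3}\bigl(2\alpha\beta-(\alpha+\beta)\gamma t\bigr)$, so $p$ increases and then decreases, with unique interior maximiser $t_{0}=\dfrac{2\alpha\beta}{(\alpha+\beta)\gamma}$. Because Ando's inequality (Lemma \ref{11}) forces $Y\le X$, while Theorem \ref{46} gives $X\le\gamma Y$, one must have $\gamma\ge 1$, and then $t_{0}\le\beta$ always; thus only two cases occur. If $\alpha\le t_{0}$ the maximum is attained at $t_{0}$, giving $\psi=p(t_{0})=\dfrac{\gamma^{2}(\alpha+\beta)^{2}}{4\alpha\beta}$; if $\alpha\ge t_{0}$ the function is decreasing on $[\alpha,\beta]$ and the maximum is $p(\alpha)=\dfrac{\gamma(\alpha+\beta)-\beta}{\alpha}$. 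These are precisely the two branches of $\psi$, and chaining the displayed inequalities yields \eqref{61}. The only genuinely delicate points are the justification of the spectral bounds (needing unitality of $\Phi$ together with monotonicity of $\#$) and the bookkeeping of the case split; the would‑be third branch $t_{0}>\beta$ is excluded exactly by $\gamma\ge1$.
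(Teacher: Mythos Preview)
Your proof is correct and follows essentially the same route as the paper's: apply the secant bound $X^{2}\le(\alpha+\beta)X-\alpha\beta I$, replace $X$ by $\gamma Y$ via Theorem~\ref{46}, and reduce to maximising the scalar function $p(t)=\bigl((\alpha+\beta)\gamma t-\alpha\beta\bigr)/t^{2}$ over $[\alpha,\beta]$. The paper packages the last step as a conjugation by $\Phi^{-1}(A\#B)$ rather than a direct L\"owner chain, and does not record your observation that $\gamma\ge1$ (via Lemma~\ref{11}) forces $t_{0}\le\beta$, but the arguments are otherwise identical.
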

\begin{proof}
According to the assumption one can see that
\begin{equation}\label{58}
\alpha \le \Phi \left( A\#B \right)\le \beta,
\end{equation}	 
and
\begin{equation}\label{59}
\alpha \le \Phi \left( A \right)\#\Phi \left( B \right)\le \beta 
\end{equation}	
where 
\[\alpha ={{m}_{1}}{{m}_{2}},\quad \beta ={{M}_{1}}{{M}_{2}}.\]
Inequality \eqref{58} implies
\[{{\Phi }^{2}}\left( A\#B \right)\le \left( \alpha +\beta  \right)\Phi \left( A\#B \right)-\alpha \beta, \]
and \eqref{59} give us
\begin{equation}\label{60}
{{\left( \Phi \left( A \right)\#\Phi \left( B \right) \right)}^{2}}\le \left( \alpha +\beta  \right)\left( \Phi \left( A \right)\#\Phi \left( B \right) \right)-\alpha \beta .
\end{equation}

Hence
\begin{align}
 \nonumber & {{\Phi }^{-1}}\left( A\#B \right){{\left( \Phi \left( A \right)\#\Phi \left( A \right) \right)}^{2}}{{\Phi }^{-1}}\left( A\#B \right) \\ \nonumber
 & \le {{\Phi }^{-1}}\left( A\#B \right)\left( \left( \alpha +\beta  \right)\left(\Phi \left( A \right)\#\Phi \left( B \right)\right)-\alpha \beta  \right){{\Phi }^{-1}}\left( A\#B \right) \quad \text{(by \eqref{60})}\\ \label{62}
 & \le \left( \gamma \left( \alpha +\beta  \right)\Phi \left( A\#B \right)-\alpha \beta  \right){{\Phi }^{-2}}\left( A\#B \right) \quad \text{(by \eqref{47})}.\\ \nonumber
\end{align}

Consider the real function $f\left( t \right)$ on $\left( 0,\infty  \right)$ defined as
\[f\left( t \right)=\frac{\gamma \left( \alpha +\beta  \right)t-\alpha \beta }{{{t}^{2}}}.\]
As a matter of fact, the inequality \eqref{62} implies that
\[{{\Phi }^{-1}}\left( A\#B \right){{\left( \Phi \left( A \right)\#\Phi \left( B \right) \right)}^{2}}{{\Phi }^{-1}}\left( A\#B \right)\le \underset{\alpha \le t\le \beta }{\mathop{\max }}\,f\left( t \right).\]
One can see that the function $f\left( t \right)$ is decreasing on $\left[ \alpha ,\beta  \right]$. By an easy computation we have 
\[f'\left( t \right)=\frac{2\alpha \beta -\gamma \left( \alpha +\beta  \right)t}{{{t}^{3}}}.\]
This function has an maximum point on 
\[{{t}_{0}}=\frac{2\alpha \beta }{\gamma \left( \alpha +\beta  \right)},\]
with the maximum value
\[f\left( {{t}_{0}} \right)=\frac{{{\gamma }^{2}}{{\left( \alpha +\beta  \right)}^{2}}}{4\alpha \beta }.\]
Whence
\begin{equation*}
\underset{\alpha \le t\le \beta }{\mathop{\max }}\,f\left( t \right)\le \left\{ \begin{array}{lr}
   f\left( {{t}_{0}} \right)&\text{ for }\alpha\le {{t}_{0}} \\ 
  f\left( \alpha  \right)&\text{ for }\alpha\ge {{t}_{0}} \\ 
\end{array} \right..
\end{equation*}
Notice that
\[f\left( \alpha  \right)=\frac{\gamma \left( \alpha +\beta  \right)-\beta }{\alpha }.\]

It is striking that we can get the same inequality \eqref{61} under the condition ${{M}_{2}}<{{m}_{1}}$. 

Hence the proof of Theorem \ref{97} is complete. 
\end{proof}
\vskip 0.5 true cm
\noindent {\bf Acknowledgment.} The authors are deeply indebted to Professor Jean Christophe Bourin  for calling our attention to the work of Lee \cite{60}.  The authors
also thank the referee for his useful comments which improved the current paper.


\vskip 0.4 true cm

{\tiny $^1$Department of Mathematics, Mashhad Branch, Islamic Azad University, Mashhad, Iran.

{\it E-mail address:} hrmoradi@mshdiau.ac.ir

\vskip 0.4 true cm

{\tiny $^2$Department of Mathematics, Mashhad Branch, Islamic Azad University, Mashhad, Iran.

{\it E-mail address:} erfanian@mshdiau.ac.ir

\vskip 0.4 true cm

$^3$Department of Mathematics, Faculty of Arts and Sciences, Adiyaman University, Adiyaman, Turkey.

{\it E-mail address:} igumus@adiyaman.edu.tr}

\vskip 0.4 true cm

$^4$Department of Mathematics, Payame Noor University, P.O. Box 19395-3697 Tehran, Iran.

{\it E-mail address:} raziyehnaseri29@gmail.com}
\end{document}